\title{The Removal Lemma for Tournaments}
\author{
Jacob Fox \thanks {
Stanford University, Stanford, CA. Supported by a Packard Fellowship, by NSF CAREER award DMS 1352121, and by an Alfred P. Sloan Fellowship. Email: jacobfox@stanford.edu.
}
\and	Lior Gishboliner \thanks{School of Mathematics, Tel Aviv University, Tel Aviv 69978, Israel.
Email: liorgis1@post.tau.ac.il.}
\and Asaf Shapira \thanks{
School of Mathematics, Tel Aviv University, Tel Aviv 69978, Israel.
Email: asafico$@$tau.ac.il. Supported in part by ISF Grant 1028/16 and ERC-Starting Grant 633509.
}
\and Raphael Yuster
\thanks{Department of Mathematics, University of Haifa, Haifa 31905, Israel. Email: raphy$@$math.haifa.ac.il. Supported in part by ISF Grant 1028/16.}}
\date{\today}
\let\expandafter\oldproof\csname\string\proof\endcsname
\let\oldendproof\endproof
\renewenvironment{proof}[1][\proofname]{%
  \oldproof[\bf #1]%
}{\oldendproof}
\theoremstyle{plain}
\newtheorem{theorem}{Theorem}[section]
\newtheorem{lemma}[theorem]{Lemma}
\newtheorem{claim}[theorem]{Claim}
\newtheorem{proposition}[theorem]{Proposition}
\newtheorem{corollary}[theorem]{Corollary}
\newtheorem{definition}[theorem]{Definition}
\newcommand{\Bin}{\ensuremath{\textrm{Bin}}}
\newcommand{\C}{\mathcal C}
\newcommand{\K}{\mathcal K}
\newcommand{\N}{\mathcal N}
\newcommand{\Q}{\mathcal Q}
\newcommand{\U}{\mathcal U}
\newcommand{\W}{\mathcal W}
\newcommand{\Image}{\text{Im}}
\newcommand{\homleq}{\leq_{\hom}}
\definecolor{RED}{rgb}{1,0,0}\definecolor{BLUE}{rgb}{0,0,1} 
\begin{document}
\maketitle

\begin{abstract}

Suppose one needs to change the direction of at least $\epsilon n^2$ edges of an $n$-vertex tournament $T$, in order to make it $H$-free.
A standard application of the regularity method shows that in this case $T$ contains at least $f^*_H(\epsilon)n^h$ copies of $H$, where $f^*_H$ is some tower-type function. It has long been observed that many graph/digraph problems become easier when assuming that
the host graph is a tournament. It is thus natural to ask if the removal lemma becomes easier if we assume that the digraph $G$ is a tournament.

Our main result here is a precise characterization of the tournaments $H$ for which $f^*_H(\epsilon)$ is polynomial in $\epsilon$, stating
that such a bound is attainable if and only if $H$'s vertex set can be partitioned into two sets, each spanning an acyclic directed graph.
The proof of this characterization relies, among other things, on a novel application of a regularity lemma for matrices due to Alon, Fischer and Newman,
and on probabilistic variants of Ruzsa-Szemer\'edi graphs.

We finally show that even when restricted to tournaments, deciding if $H$ satisfies the condition of our characterization is an NP-hard problem.

\end{abstract}

\section{Introduction}

\subsection{Background and motivation}

Suppose an $n$-vertex graph $G$ contains $cn^h$ copies of an $h$-vertex graph $H$.
It is clear that in this case one should remove at least $c'n^2$ edges in order to turn $G$ into an $H$-free graph.
The celebrated removal lemma of Ruzsa and Szemer\'edi \cite{RS} states that (at least qualitatively) this sufficient
condition is in fact necessary. More precisely, it states that there is a function $f_H(\epsilon)$ so that if one
needs to remove at least $\epsilon n^2$ edges from an $n$-vertex graph $G$ in order to make it $H$-free, then $G$ contains
at least $f_H(\epsilon)n^h$ copies of $H$. Besides its intrinsic interest, the removal lemma was extensively studied also due
to its many applications. See \cite{CF} for more background on the lemma and its many variants.

All proofs of the removal lemma apply some version of Szemer\'edi's regularity lemma \cite{Sz}, and thus can only bound $f_H(\epsilon)$ by tower-type
functions of $\epsilon$. It is a major open problem in extremal graph theory to decide if a non-tower-type bound can be obtained even for the special
case of $H=K_3$. Given the above, it is thus natural to ask for which graphs $H$ one can obtain very efficient bounds for $f_H(\epsilon)$. The first result of this type was obtained by Alon \cite{subgraphs} who proved that $f_H(\epsilon)$ is polynomial in $\epsilon$ if and only if $H$ is a bipartite graph.
Alon and Shapira \cite{directed} considered the analogous question in the setting of directed graphs and proved that $f_H(\epsilon)$ is polynomial if and only if $H$ has a homomorphism into an oriented tree or a $2$-cycle, where an {\em oriented graph} $H = (V,E)$ is an orientation of an undirected graph, that is, a directed graph in which, for every pair of distinct vertices $x,y \in V$, there is at most one edge between $x$ and $y$.

Our focus in this paper is in studying analogous questions in the setting of tournaments. The precise definition is the following:
Suppose $H$ is a fixed oriented graph. We say that an $n$-vertex tournament $T$ is $\epsilon$-far from being $H$-free if one should
change the direction of at least $\epsilon n^2$ edges in order to turn $T$ into an $H$-free tournament. It is not hard to apply the regularity
method, in a way similar to \cite{AFKS}, and show that if $T$ is $\epsilon$-far from being $H$ free then $T$ contains $f^*_H(\epsilon)n^h$ copies of
$H$, where again $f^*_H(\epsilon)$ is a tower-type function. The question we are interested in is then for which $H$ can $f^*_H(\epsilon)$ be bounded
by a polynomial in $\epsilon$?

As is well known, tournaments possess many properties not shared by general oriented graphs. As a result,
many problems that are hard to resolve in general oriented graphs, become easier for tournaments.
It is thus natural to ask if the removal lemma is easier for tournaments? Let us mention that it is known that
there is at least one $H$ for which the removal lemma is known to be easier for tournaments. Indeed, it follows from the result of \cite{directed} that $f_{C_3}(\epsilon)$ is not polynomial \footnote{This special case of the result of \cite{directed} is actually implicit already in \cite{RS}} in $\epsilon$, while Fox and Sudakov \cite{FS} proved that $f^*_{C_3}(\epsilon)$ is polynomial in $\epsilon$.

Let us conclude by mentioning that a further motivation for this paper was the work of Berger et al. \cite{BCCF} on tournaments
they called {\em Heroes}. See \cite{SS} for more details. The work of \cite{BCCF} is another nice example of a phenomenon that
holds in tournaments but fails to hold for general digraphs. One of the notions studied in \cite{BCCF} is the chromatic number
of a tournament $T$ defined as the smallest number of transitive tournaments which cover $V(T)$. As Theorem \ref{thm:NP_hard} shows,
this notion is also relevant in our setting.

\subsection{Our main results}

Our main result in this paper gives a precise characterization of the oriented graphs $H$ for which one can prove
a removal lemma in tournaments with a polynomial bound. Let us say that an oriented graph $H$ is {\em easy} if there is
a constant $c = c(H)$ satisfying $f^*_H(\epsilon) \geq \epsilon^c$ for every sufficiently small $\varepsilon > 0$. If $H$ is not easy then it is {\em hard}.

\begin{theorem}\label{thm:main}
$H$ is easy if and only if $V(H)$ can be partitioned into
$2$ vertex sets, each spanning an acyclic directed graph.
\end{theorem}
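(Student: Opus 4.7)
The plan is to prove both directions of the equivalence. The ``if'' direction (acyclic $2$-partition $\Rightarrow$ $H$ is easy) uses structural counting tailored to tournaments, generalizing the kind of argument Fox--Sudakov used for $H=C_3$. The ``only if'' direction uses probabilistic constructions built from Ruzsa--Szemer\'edi graphs, together with the Alon--Fischer--Newman regularity lemma for matrices, as hinted in the abstract.

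\textbf{Easy direction.} Suppose $V(H) = A \cup B$ with both $H[A]$ and $H[B]$ acyclic, and let $T$ be an $n$-vertex tournament that is $\epsilon$-far from $H$-free; the goal is to exhibit at least $\epsilon^{O(1)} n^h$ copies of $H$. My plan is to pick a random bipartition $V(T) = V_1 \cup V_2$ and focus on copies of $H$ whose $A$-vertices lie in $V_1$ and $B$-vertices lie in $V_2$. A standard averaging argument produces a bipartition for which these ``respecting'' copies still require $\Omega(\epsilon n^2)$ edge flips to destroy. Acyclicity of $H[A]$ and $H[B]$ lets me embed each half into many transitive-like substructures of $T[V_1]$ and $T[V_2]$, using an Erd\H{o}s--Moser-style random-linear-order argument on each side. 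The final polynomial bound comes from combining the two one-sided counts with a bipartite gluing step, in which the required crossing orientations between $V_1$ and $V_2$ are controlled by the density of bad edges produced at the averaging step.

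\textbf{Hard direction.} Now suppose every $2$-partition of $V(H)$ leaves some part with a directed cycle. For arbitrarily small $\epsilon$ I would construct an $n$-vertex tournament $T_\epsilon$ that is $\epsilon$-far from $H$-free yet contains only $\epsilon^{\omega(1)} n^h$ copies of $H$. The starting point is a Ruzsa--Szemer\'edi-type graph $R$ whose edges decompose into many large induced matchings; one orients the edges of $R$ and then fills in the non-$R$ pairs to produce a tournament, using a randomized scheme aligned with $H$'s forbidden partitions. The failure of $H$ to admit an acyclic $2$-coloring is leveraged to force every copy of $H$ in $T_\epsilon$ to span at least three ``matching layers'', yielding only $\epsilon^{\omega(1)} n^h$ copies. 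To verify $\epsilon$-farness, I would apply the Alon--Fischer--Newman matrix regularity lemma to the $\pm 1$ orientation matrix of $T_\epsilon$: the resulting structural decomposition produces many edge-disjoint near-copies of $H$, each of which forces at least one edge flip.

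\textbf{Main obstacle.} The hard direction is the central challenge. Constructing a tournament that is simultaneously far from $H$-free and sparse in $H$-copies is delicate because every pair of vertices carries an edge, so unlike in the undirected RS setting, non-edges are unavailable for suppressing accidental $H$-copies. The randomized orientation of the non-RS pairs must be designed with tight first-moment estimates to avoid spawning extra copies, and verifying $\epsilon$-farness requires applying matrix regularity to the orientation matrix of a tournament, an ingredient that appears novel in this context. Parsing the combinatorial hypothesis (no acyclic $2$-partition of $V(H)$) into a structural property compatible with both the sparsity count and the farness lower bound is the most intricate bookkeeping.
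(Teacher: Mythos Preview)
Your high-level shape is right (one direction is a counting argument, the other is a Ruzsa--Szemer\'edi-type construction), but you have assigned the two main tools to the wrong directions, and as a result both sketches have real gaps.

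\textbf{Easy direction.} The random-bipartition averaging step does not obviously work: $\epsilon$-farness is a statement about destroying \emph{all} copies of $H$, and there is no reason the minimum number of flips needed to kill only the $(V_1,V_2)$-respecting copies should be $\Omega(\epsilon n^2)$ for a typical bipartition --- different bipartitions may require entirely different sets of flips, so you cannot average. The paper's argument is quite different and hinges on two ingredients you are missing. First, one proves that every $2$-colorable $H$ is \emph{forced} by some fixed bipartite tournament $F$: there is an orientation of $K_{m,m}$ such that \emph{every} tournament completion of it contains a copy of $H$. This is nontrivial and is proved by a probabilistic construction (the same Lemma used later in the hard direction). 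Second --- and this is where Alon--Fischer--Newman actually enters --- one applies the AFN matrix regularity lemma to the adjacency matrix of $T$: either $T$ has $\epsilon^{O(1)}n^{2k}$ copies of $F$ (hence polynomially many copies of $H$), or $T$ admits a small $\delta$-homogeneous equipartition, from which one reads off a cleaning of $T$ into an $H$-free tournament using fewer than $\epsilon n^2$ flips, a contradiction. So AFN belongs here, not in the hard direction.

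\textbf{Hard direction.} Using AFN to certify $\epsilon$-farness is not how that lemma operates; it gives structure, not distance lower bounds. In the paper, farness is guaranteed \emph{directly by the construction}: on each $k$-clique of the Ruzsa--Szemer\'edi graph $R$ one places the $k$-partite tournament $F$ produced by the probabilistic lemma, whose defining property is that every completion contains $\gamma m^2$ copies of $H$ that are pairwise edge-disjoint on the cross edges. Since the cliques of $R$ are edge-disjoint, this yields $\Omega(\epsilon n^2)$ copies of $H$ with no shared cut-edge, and that is the farness. For the sparsity side, ``forces every copy of $H$ to span three matching layers'' is the right intuition but you are missing the mechanism: the paper introduces \emph{order-preserving homomorphisms} and associates to $H$ a canonical ``ordered core'' graph $K(H)$; non-$2$-colorability of $H$ forces $K(H)$ to contain an odd cycle, and one then shows that every embedding of $H$ into $T$ induces an order-preserving homomorphism onto $K(H)$, hence traces out a specific short cycle in $R$, of which there are only $r^2$ by the RS property.
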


It was shown in \cite{directed} that an oriented graph $H$ satisfies $f_H(\epsilon) \geq \epsilon^c$ in general digraphs, only if $H$ has
a homomorphism into an oriented tree. Observe that Theorem \ref{thm:main} shows that $f_H^*(\epsilon)$ is polynomial
for a much wider class of oriented graphs, that is, there is an entire family of oriented graphs for which the removal lemma
is easier for tournaments.

We believe that the proofs of both directions of Theorem \ref{thm:main} are of independent interest.
We note that the analogous ``if'' parts in the characterizations given in \cite{subgraphs,directed} followed
from simple density/Tur\'an type arguments. For example, the fact that a bipartite $H$ is easy in undirected
graphs, follows from the simple reason that a graph with $\epsilon n^2$ edges contains at least $\epsilon^{h^2}n^{h}$ copies of $H$.
In contrast, our proof requires a much more elaborate argument: we first show that for every oriented graph $H$ as in the theorem
there is an oriented complete bipartite graph, so that no matter how one completes this bipartite digraph into a tournament,
one always ends up with a tournament containing a copy of $H$ (see Lemma \ref{lem:bipartite_force}).
We then combine this with a novel application of an efficient ``conditional regularity lemma''
for matrices of Alon, Fischer and Newman \cite{bipartite} (see also \cite{FPS} and \cite{LS} for related results) in order to complete the proof.
As to the ``only if'' part, as in previous
lower bounds for removal lemmas, we also make use of variants of Ruzsa-Szemer\'edi \cite{RS}
graphs. Our construction however, requires several additional twists such as the notion of ordered homomorphisms defines in Section \ref{sec:hard},
and the probabilistic construction from Section \ref{sec:lemma}.

Let us conclude by describing our final result.
It is natural to ask if the characterization given in Theorem \ref{thm:main} is ``efficient'',
that is, how hard is it to tell if an oriented graph $H$ is easy. It follows from the work of Bokal et al. \cite{BFJKM} that
this task is in fact $NP$-hard. Continuing with the theme of studying whether problems become easier when restricted
to tournaments, it is natural to ask if one can at least recognize tournaments whose vertex set can be partitioned
into $2$ sets, each spanning an acyclic directed graph, i.e. into two transitive tournaments.
The following theorem strengthens the result of Bokal et al \cite{BFJKM} by showing that the problem is hard even for tournaments.

\begin{theorem}\label{thm:NP_hard}
For every $k \geq 2$, the problem of deciding if a tournament is $k$-colorable is $NP$-hard.
\end{theorem}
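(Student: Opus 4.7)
Our plan is to prove Theorem~\ref{thm:NP_hard} by a polynomial-time reduction from an NP-hard coloring problem. For $k \geq 3$, we reduce from the classically NP-hard graph $k$-coloring problem. For $k = 2$, where graph 2-coloring is tractable, we reduce instead from Not-All-Equal 3-SAT.

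The key building block is a \emph{color-separating gadget} $F_k$: a tournament with two distinguished vertices $s, t$ such that $F_k$ is itself $k$-colorable as a tournament, yet every tournament $k$-coloring of $F_k$ places $s$ and $t$ into different color classes. To build $F_k$, let $S$ be any tournament of (tournament) chromatic number exactly $k$ on a bounded number of vertices (such tournaments exist for every $k \geq 2$, for instance by Erd\H{o}s--Moser-type random arguments or explicit Paley-tournament constructions). Set $V(F_k) = \{s, t\} \cup V(S)$ with the arc $s \to t$, all arcs oriented $t \to x$ for $x \in V(S)$, all arcs oriented $x \to s$ for $x \in V(S)$, and the internal arcs of $S$ preserved. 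If $s$ and $t$ were placed in the same color class $W$ in some $k$-coloring of $F_k$, then for every $x \in V(S)$ the triple $\{s, t, x\}$ forms a directed 3-cycle, forcing $x \notin W$; thus $V(S)$ would be covered by only $k-1$ color classes, contradicting $\chi(S) = k$. Conversely, starting from any $k$-coloring $(U_1, \ldots, U_k)$ of $S$, assigning $s$ to $U_1 \cup \{s\}$ and $t$ to $U_2 \cup \{t\}$ yields a valid $k$-coloring of $F_k$, since the dominance pattern $t \to V(S) \to s$ places $t$ at the top of its class and $s$ at the bottom of its class.

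Given an instance graph $G$ (or NAE-3-SAT formula), we construct the tournament $T$ as follows. Take $V(G)$ as a starting vertex set, and for each edge $\{u, v\} \in E(G)$ attach a fresh private copy of $F_k$ whose distinguished vertices are identified with $u$ and $v$. The remaining pairs of vertices --- namely non-adjacent pairs within $V(G)$, pairs of internal vertices from distinct gadget copies, and pairs consisting of an internal vertex and an original vertex not forming a distinguished pair of that copy --- are oriented along a carefully chosen reference linear order on $V(T)$, producing a ``background'' transitive tournament. Any proper $k$-coloring of $G$ lifts to $T$ by coloring each gadget copy with a valid $k$-coloring of $F_k$ in which $s, t$ receive the colors prescribed by the coloring of $G$ (permuting the colors inside the gadget as needed); conversely, every tournament $k$-coloring of $T$ must, by the separating property of $F_k$, assign different colors to the endpoints of every edge of $G$, yielding a proper $k$-coloring of $G$. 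The $k = 2$ case is analogous: each clause of the NAE-3-SAT instance is encoded by a directed 3-cycle on its three literal vertices, which must be non-monochromatic in every tournament 2-coloring, while copies of $F_2$ enforce consistency between the two literal vertices of each variable.

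The main obstacle is to ensure that, when many gadget copies are simultaneously embedded in $T$, the union of gadget arcs and background arcs does not introduce spurious directed 3-cycles that over-constrain the coloring. Within each color class of $T$, the local transitive orders coming from the gadget copies must mesh with the global reference order, and no cycle of length three should arise from an interaction between arcs of different gadgets via a background arc. Verifying this will likely require a careful (possibly randomized) choice of the reference linear order together with a union-bound argument, and this gluing step is the technical heart of the reduction.
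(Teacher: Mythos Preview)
Your gadget $F_k$ is correct, and the direction ``$T$ is $k$-colorable $\Rightarrow$ $G$ is $k$-colorable'' is immediate. The gap is exactly where you locate it, but it is structural rather than probabilistic. Since $\chi(S)=k$, in \emph{every} $k$-coloring of $F_k$ the color class of $s$ must meet $V(S)$; fix such an internal vertex $x$, so the gadget forces the arc $x\to s$. Now let $v\in V(G)$ be any other vertex in the same color class as $s$, not an endpoint of this gadget. If the construction gives $s\to v$ and the background arc is $v\to x$, then $s\to v\to x\to s$ is a monochromatic directed triangle; flipping the background to $x\to v$ only shifts the obstruction to the $t$-side (one gets $t\to y\to v'\to t$ for $y$ in $t$'s class inside $S$ and any $v'$ in that class with $v'\to t$). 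A random reference order does not help: the bad triangle is forced by the gadget arcs together with \emph{any} background choice whenever the color class of $s$ (respectively $t$) contains a vertex after $s$ (respectively before $t$), so there is nothing for a union bound to act on. Repairing this would require a gadget admitting a ``clean boundary'' coloring in which the distinguished vertices' classes contain no internal vertex sending an arc the wrong way across the boundary --- a property your $F_k$ provably lacks, since every class of $S$ is nonempty and all of $V(S)$ points to $s$.

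The paper's proof proceeds differently. For $k=2$ it reduces from the Triangle-Free Cut problem via a specific seven-vertex gadget that forces its two distinguished vertices $u,v$ into the \emph{same} class, and crucially admits a $2$-coloring in which every in-neighbor of $u$ and every out-neighbor of $v$ lies in the opposite class. This clean-boundary property is exactly what makes the gluing go through, and even with it the verification that the assembled tournament is $2$-colorable is a careful case analysis rather than a one-liner. For $k\ge 3$ the paper avoids graph coloring entirely: from a tournament $T$ one forms $T'$ by taking two disjoint copies $T_1,T_2$ and a new vertex $z$ with $T_1\to T_2\to z\to T_1$; then $T$ is $(k{-}1)$-colorable iff $T'$ is $k$-colorable, so induction on $k$ reduces everything to the base case $k=2$.
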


\subsection{Organization}
In Section \ref{sec:lemma} we describe a probabilistic construction that will be crucial
in the proofs of both directions of Theorem \ref{thm:main}. The proof of the first direction of Theorem \ref{thm:main} is
given in Section \ref{sec:easy}, while the second is given in Section \ref{sec:hard}. The proof of Theorem \ref{thm:NP_hard} is given in Section \ref{sec:np}.

\section{A Preliminary Lemma}\label{sec:lemma}

A {\em proper $k$-coloring} (or simply {\em $k$-coloring}) of an oriented graph $H$ is a partition of $V(H)$ into $k$ sets, each inducing an acyclic digraph. We say that $H$ is {\em $k$-colorable} if it has a proper $k$-coloring. Notice that if $H$ is a tournament then this definition coincides with the definition of a $k$-colorable tournament.

In this section we prove Lemma \ref{lem:force}, stated below, which will be a key ingredient in the proof
of both directions of Theorem \ref{thm:main}. We start with some notation which we will also use in later sections.
For a pair of vertices $x,y \in V$, we write $x \rightarrow y$ to mean that $(x,y) \in E$. For a pair of disjoint subsets $X,Y \subseteq V$, we use the notation $X \rightarrow Y$ to mean that there is no $(x,y) \in X \times Y$ for which $y \rightarrow x$. In other words,
$X \rightarrow Y$ means that for every $(x,y) \in X \times Y$, either
$x \rightarrow y$ or there are no edges between $x$ and $y$.\footnote{This definition might seem strange as, for example, $x \rightarrow y$ is not the same as $\{x\} \rightarrow \{y\}$. Nevertheless, this definition is useful and should not cause confusion.}
Evidently, if the digraph is a tournament then
$X \rightarrow Y$ is equivalent to saying that $x \rightarrow y$ for every $(x,y) \in X \times Y$.
For a digraph $G$ and a set $X \subseteq V(G)$, we use $G[X]$ to denote the subdigraph of $G$ induced by $X$.

A {\em $k$-partite tournament} is an orientation of a complete $k$-partite graph. Notice that a bipartite tournament (i.e., a $k$-partite tournament for $k=2$) is not the same as a $2$-colorable tournament. A
{\em completion} of a $k$-partite tournament
$F = (V_1 \cup V_2 \dots \cup V_k, E)$ is any tournament on $V(F)$ that agrees with $F$ on the edges between the sets $V_1,...,V_k$, i.e. any tournament obtained from $F$ by adding $k$ arbitrary tournaments on the sets $V_1,\dots,V_k$.

\begin{lemma}\label{lem:force}
For every $h \geq 2$ there are $m_0 = m_0(h)$ and
$\gamma = \gamma(h) > 0$ with the following property. Let $H$ be an oriented graph on $h$ vertices and let $D$ be an oriented graph on $[k]$, where
$2 \leq k \leq h$. Suppose that $H$ has a proper $k$-coloring,
$V(H) = H_1 \cup \dots \cup H_k$, such that $H_i \rightarrow H_j$ for every $(i,j) \in E(D)$. Then for every
$m \geq m_0$ there is a $k$-partite tournament
$F = (V_1 \cup \dots \cup V_k, E(F))$ such that:
\begin{enumerate}
\item $|V_i| = m$ for every $i = 1,...,k$.
\item $V_i \rightarrow V_j$ for every $(i,j) \in E(D)$.
\item Every completion of $F$ contains a collection of at least
$\gamma m^2$ copies of $H$ with the property that every edge
$e \in E(F)$ is contained in at most one of these copies.
\end{enumerate}
\end{lemma}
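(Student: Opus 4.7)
The plan is to (i) establish a small ``forcing template'' $F_0$ on $O_H(1)$ vertices per part whose every completion already contains a copy of $H$, (ii) blow $F_0$ up to obtain the desired $F$ with $|V_i|=m$, and (iii) extract an $F$-edge-disjoint packing of $\Omega(m^2)$ copies of $H$.

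For (i), fix a constant $s=s(H)$ large enough and take $F_0$ to be the random $k$-partite tournament on parts of size $s$ obtained by orienting edges $V_i\to V_j$ whenever $(i,j)\in E(D)$ and orienting every other inter-part edge independently and uniformly at random. The key input is that every tournament on $s$ vertices contains $\Omega(s^{h_i})$ labeled copies of $H[H_i]$: since $H[H_i]$ is acyclic, any topological order embeds it into the transitive tournament $T_{h_i}$, and every tournament on $s$ vertices has $\Omega(s^{h_i})$ labeled copies of $T_{h_i}$. For any completion $T=F_0\cup T_1\cup\dots\cup T_k$, restrict attention to embeddings $\phi:V(H)\to V(T)$ with $\phi(H_i)\subseteq V_i$; the event that $\phi$ is a copy factors into intra-$V_i$ edges (governed by the $T_i$ and contributing the $\prod_i \Omega(s^{h_i})=\Omega(s^h)$ factor), $D$-forced inter-part edges (automatically correct by the coloring hypothesis $H_i\to H_j$), and the remaining inter-part edges (each correctly oriented in $F_0$ with probability $1/2$ independently). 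Hence $\mathbb{E}[\#\text{copies in }T]\ge c(H)\,s^h$, and a concentration plus union bound over the $2^{O(s^2)}$ completions of $F_0$ fixes a deterministic $F_0$ that forces at least one copy of $H$ in every completion.

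For (ii), let $F$ be the $t$-blow-up of $F_0$ with $t=m/s$: replace every vertex of $F_0$ by a cluster of $t$ vertices, with edges between clusters inheriting $F_0$'s orientation. Then $F$ is a $k$-partite tournament satisfying $|V_i|=st=m$ and $V_i\to V_j$ for $(i,j)\in E(D)$. For any completion $T$ of $F$, a selection of one vertex from each cluster produces an induced sub-tournament on $ks$ vertices whose inter-cluster edges come from $F_0$ and whose intra-part edges come from the adversary's tournament on the relevant part of $F$; this sub-tournament is therefore a completion of $F_0$ and, by (i), contains a copy of $H$. Since every copy of $H$ whose $h$ vertices lie in distinct clusters belongs to at most $t^{ks-h}$ of the $t^{ks}$ possible selections, $T$ contains at least $t^{ks}/t^{ks-h}=t^h=\Omega(m^h)$ copies of $H$.

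For (iii), a greedy argument extracts the desired $\gamma m^2$ $F$-edge-disjoint copies: each $F$-edge lies in at most $O(m^{h-2})$ of the counted copies (fix its two endpoints as the image of a specific edge of $H$; the remaining $h-2$ vertices are free), so iteratively selecting a copy and discarding those sharing an $F$-edge with it produces a packing of size $\Omega(m^h/m^{h-2})=\Omega(m^2)$. The main obstacle is step (i): both the log of the number of completions of $F_0$ and the exponent in a naive McDiarmid-style concentration bound are of order $s^2$, and the constant in the concentration depends on $H$ through factors like $2^{-|E(H)|}$. If a direct union bound falls short, the plan is to invoke a polynomial-concentration inequality (e.g.\ Kim--Vu) for the degree-$O(1)$ polynomial that counts copies, or to replace the union bound by a more structural argument tailored to the coloring $V(H)=H_1\cup\dots\cup H_k$.
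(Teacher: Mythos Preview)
Your steps (ii) and (iii) are sound: once a fixed-size template $F_0$ with the forcing property exists, the blow-up and greedy packing arguments go through exactly as you describe. The genuine gap is in step (i), and it is precisely the obstacle you flag at the end.

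Concretely, for a fixed completion $T$ the copy count $X$ satisfies $\mathbb{E}[X]\ge c(H)\,s^h$ with $c(H)$ of order $2^{-\Theta(h^2)}$ (coming both from the $2^{-|E(H)|}$ factor on the random inter-part edges and from the Ramsey-type lower bound on transitive sub-tournaments). McDiarmid gives a lower-tail probability of the form $\exp(-c'(H)\,s^2)$ with $c'(H)$ comparable to $c(H)^2$, so again $2^{-\Theta(h^2)}$. On the other side, the number of completions is $2^{k\binom{s}{2}}$, whose logarithm is $\Theta(k s^2)$. Both exponents scale like $s^2$, and the coefficient on the concentration side is exponentially small in $h$ while the coefficient on the union-bound side is $\Theta(k)$. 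Hence the union bound fails for every $s$, not just for small $s$. Janson does not help either: for example already for $H=C_3$ with $k=2$ one gets $\mu^2/\mathrm{Var}\approx s^2/16$, which still loses to $2^{s^2}$ completions. Kim--Vu type bounds are even weaker here, since the relevant polynomial has degree $\Theta(h^2)$ and the resulting tail is far from $\exp(-\Omega(s^2))$.

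The paper sidesteps this entirely. It works directly with parts of size $m$ (no template $F_0$) and, crucially, does \emph{not} union-bound over completions. The observation is that all a completion contributes is, via Ramsey, a system of disjoint transitive $h_i$-sets inside each $V_i$ together with their labelings; the paper calls such a system an $H$-partition, and there are only $m^{O(m)}$ of them. For a fixed $H$-partition, one takes a family $\mathcal{S}\subseteq[m/2h]^k$ of $\Omega(m^2)$ index $k$-tuples any two of which agree in at most one coordinate; then the events ``the $s$-th tuple of transitive pieces carries a copy of $H$'' are mutually \emph{independent} over $s\in\mathcal{S}$ (distinct tuples share vertices in at most one part, hence share no random inter-part edge). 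Chernoff now gives failure probability $\exp(-\Omega_h(m^2))$, which comfortably beats the $m^{O(m)}$ union bound. The at-most-one-shared-coordinate property of $\mathcal{S}$ simultaneously yields the $F$-edge-disjointness of the resulting copies, so no separate greedy packing step is needed. This independence-via-$\mathcal{S}$ trick, together with union-bounding over $H$-partitions rather than completions, is exactly the missing idea in your step (i).
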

\noindent
In the proof of Lemma \ref{lem:force} we use the following three claims.
Denote by $\text{Bin}(N,p)$ the binomial distribution with parameters $N$ and $p$.
We will need the following standard Chernoff-type bound.
\begin{claim}[\cite{Prob_Method}]\label{claim:Chernoff}
$\Pr \Big[ \text{Bin}(N,p) < (1-\alpha)Np  \Big] \leq e^{-Np\alpha^2/2}$\;.
\end{claim}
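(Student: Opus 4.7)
The plan is to use the standard Chernoff argument via the Laplace transform (exponential moment method). Write $X = X_1 + \cdots + X_N$ where the $X_i$ are i.i.d.\ Bernoulli$(p)$, so that by independence, for any $t > 0$,
$$\mathbb{E}[e^{-tX}] \;=\; \prod_{i=1}^N \mathbb{E}[e^{-tX_i}] \;=\; (1-p+pe^{-t})^N.$$

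First I would apply Markov's inequality to the nonnegative random variable $e^{-tX}$: for every $t > 0$,
$$\Pr\bigl[X < (1-\alpha)Np\bigr] \;=\; \Pr\bigl[e^{-tX} > e^{-t(1-\alpha)Np}\bigr] \;\leq\; e^{t(1-\alpha)Np}\cdot(1-p+pe^{-t})^N.$$
Using the elementary inequality $1+x \leq e^x$ with $x = -p(1-e^{-t})$, the factor $(1-p+pe^{-t})^N$ is bounded by $\exp\bigl(-Np(1-e^{-t})\bigr)$, so the overall bound collapses to $\exp\bigl(Np[t(1-\alpha) - (1-e^{-t})]\bigr)$.

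Next I would optimize the exponent over $t > 0$. Differentiating $t(1-\alpha) - (1-e^{-t})$ with respect to $t$ gives the first-order condition $e^{-t} = 1-\alpha$, i.e.\ $t = \ln\tfrac{1}{1-\alpha}$. Substituting yields the clean intermediate bound
$$\Pr\bigl[X < (1-\alpha)Np\bigr] \;\leq\; \exp\Bigl(-Np\bigl[\alpha + (1-\alpha)\ln(1-\alpha)\bigr]\Bigr).$$

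Finally, all that remains is the deterministic scalar inequality $\alpha + (1-\alpha)\ln(1-\alpha) \geq \alpha^2/2$ for $\alpha \in [0,1)$, which converts the bound above into the stated $e^{-Np\alpha^2/2}$. This is verified by letting $g(\alpha) := \alpha + (1-\alpha)\ln(1-\alpha) - \alpha^2/2$ and observing that $g(0)=0$ while $g'(\alpha) = -\ln(1-\alpha) - \alpha \geq 0$, since the Taylor series $-\ln(1-\alpha) = \sum_{k\geq 1} \alpha^k/k$ dominates $\alpha$. The only step requiring any thought is the optimization of $t$; everything else is routine, and there is no real obstacle beyond keeping the algebra of the exponent straight.
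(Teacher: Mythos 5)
Your argument is correct and complete: the exponential-moment bound, the optimization $e^{-t}=1-\alpha$, and the scalar inequality $\alpha+(1-\alpha)\ln(1-\alpha)\geq\alpha^2/2$ (valid on $[0,1)$, with $\alpha\geq 1$ trivial since the event is then empty) together give exactly the stated lower-tail Chernoff bound, and this is the standard proof. The paper itself offers no proof of this claim -- it is quoted directly from Alon--Spencer \cite{Prob_Method} and used only with $\alpha=\tfrac12$ -- so there is nothing to compare beyond noting that your derivation is the canonical one behind the cited result.
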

\noindent
The following claim is a well-known fact from Ramsey Theory.
\begin{claim}[\cite{Moon}]\label{claim:Ramsey}
Every tournament on $2^{k-1}$ vertices contains a transitive subtournament on $k$ vertices.
\end{claim}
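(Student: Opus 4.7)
\textbf{Proof proposal for Claim \ref{claim:Ramsey}.} The plan is to prove the claim by induction on $k$, using the fact that in any tournament one of the two out/in-neighborhoods of a chosen vertex must contain at least half of the remaining vertices, and then extending a transitive subtournament found in that neighborhood by the chosen vertex.

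The base case $k=1$ is trivial: a single vertex is a transitive tournament on $1$ vertex, and $2^{0}=1$. For the inductive step, assume the claim holds for $k-1$, so that every tournament on $2^{k-2}$ vertices contains a transitive subtournament on $k-1$ vertices. Let $T$ be a tournament on $n = 2^{k-1}$ vertices and pick an arbitrary vertex $v \in V(T)$. Partition $V(T) \setminus \{v\}$ into $N^+(v)$, the set of out-neighbors of $v$, and $N^-(v)$, the set of in-neighbors of $v$; since $T$ is a tournament, $|N^+(v)| + |N^-(v)| = n-1 = 2^{k-1}-1$, so by the pigeonhole principle at least one of them has size at least $\lceil (2^{k-1}-1)/2 \rceil = 2^{k-2}$.

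Assume without loss of generality that $|N^+(v)| \geq 2^{k-2}$ (the other case is symmetric). Apply the induction hypothesis to the subtournament $T[N^+(v)]$, obtaining a transitive subtournament on $k-1$ vertices, say with transitive ordering $u_1, u_2, \dots, u_{k-1}$ (meaning $u_i \rightarrow u_j$ for all $i<j$). Since every $u_i \in N^+(v)$, we have $v \rightarrow u_i$ for all $i$, so the ordering $v, u_1, u_2, \dots, u_{k-1}$ is a transitive ordering on $k$ vertices. In the symmetric case $|N^-(v)| \geq 2^{k-2}$, we instead place $v$ at the end of the ordering produced by the induction hypothesis inside $N^-(v)$. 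This completes the induction.

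There is no real obstacle here; the only minor point worth verifying carefully is the ceiling arithmetic $\lceil (2^{k-1}-1)/2 \rceil = 2^{k-2}$, which ensures the induction hypothesis applies with exactly the right bound.
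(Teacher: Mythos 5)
Your induction is correct: the paper itself does not prove Claim \ref{claim:Ramsey} but only cites Moon's book, and the argument you give (pick a vertex, note that one of $N^+(v)$, $N^-(v)$ has at least $\lceil(2^{k-1}-1)/2\rceil = 2^{k-2}$ vertices, apply the induction hypothesis there, and attach $v$ at the front or back of the transitive order) is exactly the standard proof of this classical fact. The only point needing care, the ceiling computation, is fine since $2^{k-1}-1$ is odd for $k\geq 2$, so nothing is missing.
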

\noindent

\begin{claim}\label{claim:disj_collection}
Let $t,k \geq 1$ be integers. Then there is a collection
$\mathcal{S} \subseteq [t]^{k}$ of size at least $t^2/k^2$ such that every pair of distinct $k$-tuples in $\mathcal{S}$ have at most one identical entry.
\end{claim}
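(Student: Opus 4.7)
The plan is a simple greedy / double-counting argument. I would construct $\mathcal{S}$ by the following iterative procedure: start with $\mathcal{S} = \emptyset$, and repeatedly add any $k$-tuple $\mathbf{a} \in [t]^k$ that agrees in at most one coordinate with every element already in $\mathcal{S}$, until no such tuple exists. By construction, the resulting $\mathcal{S}$ satisfies the required pairwise-intersection condition, so the only thing to verify is the size bound.

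When the procedure terminates, every tuple $\mathbf{a} \in [t]^k$ must agree with some element of $\mathcal{S}$ in at least two coordinates --- otherwise we would have continued adding. For each fixed $\mathbf{s} \in \mathcal{S}$ and each pair of coordinates $i<j$, the number of tuples $\mathbf{a} \in [t]^k$ with $a_i=s_i$ and $a_j=s_j$ is exactly $t^{k-2}$. A union bound over the $\binom{k}{2}$ coordinate pairs shows that each $\mathbf{s} \in \mathcal{S}$ ``blocks'' at most $\binom{k}{2} t^{k-2}$ tuples, so
\[
t^k \;\leq\; |\mathcal{S}| \cdot \binom{k}{2}\, t^{k-2},
\]
which gives $|\mathcal{S}| \geq t^2/\binom{k}{2} \geq t^2/k^2$, as required.

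There is no real obstacle here; the only genuine choice is between the greedy approach above and a probabilistic-alteration variant (sample $N \approx 2t^2/k^2$ tuples i.i.d.\ uniformly from $[t]^k$ and delete one tuple from every pair that agrees in two or more coordinates; the expected number of such bad pairs is at most $\binom{N}{2}\binom{k}{2}/t^2 \leq N/2$, leaving a valid $\mathcal{S}$ of size at least $N/2 \geq t^2/k^2$). Either way, the entire content of the claim is the union bound over the $\binom{k}{2}$ pairs of coordinates, and no finite-field or number-theoretic input is needed.
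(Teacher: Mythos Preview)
Your proposal is correct and follows essentially the same greedy argument as the paper: build $\mathcal{S}$ greedily and observe that each chosen tuple eliminates at most $\binom{k}{2}t^{k-2}$ others. The only cosmetic difference is bookkeeping---the paper counts discarded tuples step by step to get $|\mathcal{S}| \geq t^k/\bigl(1+\binom{k}{2}t^{k-2}\bigr)$, while you count coverage at termination to get $t^k \leq |\mathcal{S}|\binom{k}{2}t^{k-2}$---but these are the same computation.
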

\begin{proof}
We construct the collection $\mathcal{S}$ greedily: we start with an empty collection, add an arbitrary $k$-tuple to it, discard all $k$-tuples that coincide in more than one entry with the $k$-tuple we added and repeat. At the beginning we have all $t^k$ of the $k$-tuples in $[t]^k$. At each step we discard at most $\binom{k}{2}t^{k-2}$ tuples.
Therefore, at the end of the process we have a collection of size at least
\begin{equation*}
\frac{t^k}{1 + \binom{k}{2}t^{k-2}} \geq \frac{t^k}{k^2t^{k-2}} = \frac{t^2}{k^2},
\end{equation*}
as required.
\end{proof}
\begin{proof}[Proof of Lemma \ref{lem:force}]
For every $i = 1,...,k$ put $h_i = |H_i|$. Fix an integer $m > m_0(h)$, where $m_0(h)$ will be chosen later.
For convenience of presentation, we assume that $m$ is divisible by $2h$ and by $2h_i$ for every $i$.
Let $V_1,...,V_k$ be pairwise-disjoint vertex sets of size $m$ each. The edges between the sets $V_1,...,V_k$ are oriented as follows: for every
$(i,j) \in E(D)$ we direct all edges from $V_i$ to $V_j$. For every $1 \leq i < j \leq k$ for which $(i,j),(j,i) \notin E(D)$, orient the edges between $V_i$ and $V_j$ randomly and independently with probability $1/2$. We will show that with positive probability, the resulting $k$-partite tournament, $F$, satisfies the assertion of Item 3 in the lemma, thus finishing the proof.

\noindent
An {\em $H$-partition} is a tuple $\left( \mathcal{P}_{i,j}, \mathcal{T}_{i,j} \right)_{i,j}$, where $1 \leq i \leq k$ and $1 \leq j \leq \frac{m}{2h_i}$, with the following properties.
\begin{itemize}
\item For each $1 \leq i \leq k$,
$\mathcal{P}_{i,1},...,\mathcal{P}_{i,\frac{m}{2h_i}}$ are pairwise-disjoint subsets of $V_i$, each of size $h_i = |H_i|$.
\item For each $1 \leq i \leq k$ and $1 \leq j \leq \frac{m}{2h_i}$,
$T_{i,j}$ is a labeled transitive tournament on the set $\mathcal{P}_{i,j}$.
\end{itemize}
Note that $\bigcup_{j=1}^{m/2h_i}{\mathcal{P}_{i,j}}$ is a subset of $V_i$ of size exactly $\frac{m}{2}$.
The number of ways to choose an $H$-partition is exactly
\begin{equation}\label{eq:H_partition_count}
\prod_{i=1}^{k}{\frac{m!}{(m/2)!}} \leq m^{km}.
\end{equation}


\noindent
By Claim \ref{claim:disj_collection} with parameter $t = \frac{m}{2h}$, there is a collection $\mathcal{S} \subseteq \left[ \frac{m}{2h} \right]^k \subseteq
\big[\frac{m}{2h_1} \big]
\times \dots \times
\big[\frac{m}{2h_k} \big]$ such that
$\left| \mathcal{S} \right| \geq \left( \frac{m}{2hk} \right)^2
\geq \frac{m^2}{4h^4}$,
and
\begin{equation}\label{eq:disjoint_collection_property}
\forall s=(s_1,...,s_k), s'=(s'_1,...,s'_k) \in \mathcal{S}, \; \; \;
\#\left\{ 1 \leq i \leq k : s_{i} = s'_{i}\right\} \leq 1.
\end{equation}

For each $i = 1,\dots,k$ we fix a linear ordering of the vertices of $H_i$ in which all edges point forward, that is, if $u,v \in H_i$ and $u \rightarrow v$ then $u$ precedes $v$ in the ordering. Such an ordering exists since $H_i$ is acyclic. Fix an $H$-partition
$\Q = \left( \mathcal{P}_{i,j}, \mathcal{T}_{i,j} \right)_{i,j}$ and let
$s = (s_1,...,s_k) \in \mathcal{S}$. Since $T_{i,s_i}$ is transitive and $H_i$ is acyclic, $H_i$ can be embedded into $T_{i,s_i}$. In what follows, when we say that $T_{i,s_i}$ plays the role of $H_i$ we mean that $H_i$ is embedded in $T_{i,s_i}$ in an order-preserving way with respect to our fixed linear ordering of $H_i$ and the unique linear ordering of $T_{i,s_i}$. Let $A_{\Q}(s)$ be the event that $T_{1,s_1} \cup \dots \cup T_{k,s_k}$, together with the edges of $F$ connecting the sets $\mathcal{P}_{1,s_1},\dots,\mathcal{P}_{k,s_k}$, contains a copy of $H$ with $T_{i,s_i}$ playing the role of $H_i$. Then
$\mathbb{P} \left[ A_{\Q}(s) \right] \geq
2^{-\sum{h_ih_j}} \geq 2^{-h^2}$.
Observe that by (\ref{eq:disjoint_collection_property}), the events $\{A_{\Q}(s) : s \in \mathcal{S}\}$ are independent. Since
$\left| \mathcal{S} \right| \geq \frac{m^2}{4h^4}$, the random variable
$$ X_{\Q} = \sum\limits_{s \in \mathcal{S}}{ \mathbbm{1}_{A_{\Q}(s)}} $$
stochastically dominates a random variable with distribution
$\Bin\left( \frac{m^2}{4h^4}, 2^{-h^2} \right)$. By Claim \ref{claim:Chernoff} with parameter
$\alpha = \frac{1}{2}$ we have:
\begin{equation*}
\mathbb{P}\left[ X_{\Q} < \frac{2^{-h^2}m^2}{8h^4} \right] \leq
\mathbb{P}\left[ \Bin\left( \frac{m^2}{4h^4}, 2^{-h^2} \right) < \frac{2^{-h^2}m^2}{8h^4} \right] \leq
\exp\left\{ - \frac{m^2}{32h^42^{h^2}} \right\} < m^{-hm} \leq m^{-km}.
\end{equation*}
The strict inequality above holds if $m$ is large enough. We choose $m_0(h)$ to be large enough so that this inequality holds for every $m \geq m_0(h)$. Set
$\gamma = \gamma(h) = \frac{2^{-h^2}}{8h^4}$.
By (\ref{eq:H_partition_count}), there are at most $m^{km}$ ways to choose an $H$-partition $\Q$. By the union bound over all $H$-partitions we get that the following event has positive probability: for every $H$-partition $\Q$, the number of
$s \in \mathcal{S}$ for which $A_{\Q}(s)$ happened is at least
$\gamma m^2$. We now show that if this event happens then $F$ satisfies the assertion of Item 3 in the lemma.

Let $T$ be a completion of $F$. For every $1 \leq i \leq k$, we use Claim \ref{claim:Ramsey} to extract from $V_i$ a collection
$\mathcal{P}_{i,1},...,\mathcal{P}_{i,\frac{m}{2h_i}}$ of pairwise-disjoint sets, each of size $h_i$, such that $T[\mathcal{P}_{i,j}]$ is transitive for every
$1 \leq j \leq \frac{m}{2h_i}$. We extract these sets one by one and stop when there are
$\frac{m}{2}$ remaining vertices. By Claim \ref{claim:Ramsey}, we can do this as long as there are at least $2^{h_i-1}$ remaining vertices. By choosing $m_0(h)$ to be large enough we guarantee that $\frac{m}{2} \geq 2^{h_i-1}$.

For every $1 \leq i \leq k$ and $1 \leq j \leq \frac{m}{2h_i}$, set
$T_{i,j} = T[\mathcal{P}_{i,j}]$.
Consider this $H$-partition $\Q = \left( \mathcal{P}_{i,j}, \mathcal{T}_{i,j} \right)_{i,j}$.
By our assumption, the event $A_{\Q}(s)$ happened for at least $\gamma m^2$ of the elements
$s \in \mathcal{S}$.
By the definition of the event $A_{\Q}(s)$, if this event happened then the vertex-set $\mathcal{P}_{1,s_1} \cup \dots \cup \mathcal{P}_{k,s_k}$
contains a copy of $H$ (in the tournament $T$) with $\mathcal{T}_{i,s_i}$ playing the role of $H_i$. By (\ref{eq:disjoint_collection_property}), every pair of these copies can share vertices in no more than one of the clusters $V_1,\dots,V_k$. Therefore, every edge $e \in E(F)$ (that is, an edge that connects vertices in two distinct clusters $V_i$, $V_j$) is contained in at most one of these copies. Thus, Item 3 in the lemma holds, completing the proof.
\end{proof}

\section{Easy Tournaments}\label{sec:easy}

\noindent
In this section we prove the first part of Theorem \ref{thm:main}.
For convenience, we restate as follows.

\begin{theorem}\label{thm:easy}
For every $h$ there are $\varepsilon_0 = \varepsilon_0(h) > 0$ and
$d = d(h)$ with the following property. For every $2$-colorable oriented graph $H$ on $h$ vertices and for every positive
$\varepsilon < \varepsilon_0$, if a tournament $T$ on
$n \geq n_0(\varepsilon)$ vertices is $\varepsilon$-far from being $H$-free then $T$ contains at least $\varepsilon^dn^h$ copies of $H$.
\end{theorem}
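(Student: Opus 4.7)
The proof combines Lemma~\ref{lem:force} with the polynomial-size conditional regularity lemma for matrices of Alon, Fischer and Newman. Fix a 2-coloring $V(H) = H_1 \cup H_2$ and apply Lemma~\ref{lem:force} with $k = 2$ and $D = \emptyset$ (so the ordering constraint is vacuous) to obtain, for a constant $m = m_0(h)$ depending only on $h$, a bipartite tournament $F^\ast$ on two parts of size $m$ such that every completion of $F^\ast$ contains $\gamma m^2$ copies of $H$, edge-disjoint on the bipartite edges. The plan is to find many induced copies of $F^\ast$ as bipartite subtournaments of $T$: each such copy, together with the induced tournaments on its two parts, is a completion of $F^\ast$ and so contributes $\gamma m^2$ copies of $H$ living on a $2m$-vertex portion of $T$.

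To count $F^\ast$-copies efficiently, apply the AFN conditional regularity lemma to the adjacency matrix of $T$ with parameter $\delta = \varepsilon^{c_1}$, producing a partition $V(T) = W_1 \cup \cdots \cup W_p$ with $p = \mathrm{poly}(1/\varepsilon)$ roughly equal parts. The key feature is that the partition is regular with respect to the constant-size pattern $F^\ast$: for all but a $\delta$-fraction of pairs $(i,j)$, the number of induced $F^\ast$-copies in $T[W_i, W_j]$ is within $\delta (n/p)^{2m}$ of the quasi-random prediction based on the bipartite edge density $p_{ij}$. Let $R$ be the reduced tournament on $[p]$ defined by orienting $i\to j$ whenever $p_{ij} \ge 1/2$, and $T^\ast$ the corresponding majority-rounded tournament; then the $L_1$-distance from $T$ to $T^\ast$ is $\sum_{i\neq j}\min(p_{ij},1-p_{ij})|W_i||W_j|$, in which unbalanced pairs (with $p_{ij}\notin[\delta,1-\delta]$) contribute at most $\delta n^2$.

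The argument now splits into two cases. If the balanced pairs (those with $p_{ij}\in[\delta,1-\delta]$) together contribute more than $\varepsilon n^2/2$ to the imbalance, then each balanced pair carries at least $\mathrm{poly}(\varepsilon)\cdot(n/p)^{2m}$ induced $F^\ast$-copies by the AFN quasi-randomness; summing over balanced pairs and correcting for the $O(n^{2m-h})$ overcounting of each $H$-copy yields $\varepsilon^d n^h$ copies of $H$ via Lemma~\ref{lem:force}. Otherwise $T$ is within $\varepsilon n^2/2$ edges of $T^\ast$, hence $T^\ast$ is $(\varepsilon/2)$-far from $H$-free. Since $T^\ast$ is essentially a blow-up of $R$, either $R$ itself contains a copy of $H$ on some $h$ parts (in which case the blow-up produces $(n/p)^h = \mathrm{poly}(\varepsilon)\cdot n^h$ copies of $H$ in $T$) or all $H$-copies of $T^\ast$ lie inside single parts $W_i$, forcing some $T[W_i]$ to be $\Omega(\varepsilon p)$-far from $H$-free at scale $n/p$, whereupon we recursively invoke the theorem inside $W_i$. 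Choosing $c_1$ small enough ensures the recursion terminates in a bounded number of steps.

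The main obstacle is the design of the AFN-based argument. We need a polynomial-size partition that simultaneously controls (i) the bipartite density profile for the rounding analysis and (ii) the $F^\ast$-embedding counts for the balanced-pair analysis. The polynomial bound on $p$ is feasible only because the test pattern $F^\ast$ has constant size depending on $h$ alone, which is the ``conditional'' aspect of the AFN regularity lemma. Additional bookkeeping in the recursive case is required to keep the exponent $d$ bounded independently of the recursion depth and to ensure the farness parameter stays below the threshold $\varepsilon_0(h)$ throughout the recursion.
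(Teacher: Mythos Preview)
Your overall plan---use Lemma~\ref{lem:force} to produce a fixed bipartite pattern $F^*$ forcing $H$, then apply the Alon--Fischer--Newman lemma---matches the paper's strategy. But the execution has several genuine gaps.

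\textbf{The AFN lemma does not give what you claim.} You assert that the partition is ``regular with respect to $F^\ast$'', meaning that for most pairs $(W_i,W_j)$ the $F^\ast$-count is close to a quasirandom prediction from $p_{ij}$. The AFN lemma (Lemma~\ref{lem:AFN} in the paper) says no such thing: its conclusion is a dichotomy---either the matrix already contains $(\delta/k)^{ck^2}n^{2k}$ copies of \emph{every} $k\times k$ pattern, or it admits a $\delta$-homogeneous partition (most pairs have density within $\delta$ of $0$ or $1$). There is no per-pair counting of $F^\ast$-copies. In particular, a ``balanced'' pair $(W_i,W_j)$ with $p_{ij}\in[\delta,1-\delta]$ need not contain any copy of $F^\ast$ at all: it could be, say, the disjoint union of a full $A\to W_j$ block and a full $W_j\to B$ block. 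So your Case~1 argument (``each balanced pair carries $\mathrm{poly}(\varepsilon)(n/p)^{2m}$ copies of $F^\ast$'') is unsupported. The paper instead takes the ``many copies of $F$'' branch of the dichotomy directly; when that branch fails, it works entirely in the homogeneous-partition branch.

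\textbf{The Case~2 dichotomy is incomplete, and the transfer from $T^\ast$ to $T$ is missing.} You split into ``$R$ contains a copy of $H$ on $h$ parts'' versus ``all $H$-copies of $T^\ast$ lie in a single part''. But a copy of $H$ in $T^\ast$ may span any number $\ell\in\{2,\dots,h\}$ of parts, with several vertices in each; this does not produce a copy of $H$ in $R$, nor does it lie in one part. Moreover, even when the copy does lie in $h$ distinct parts, you cannot simply read off $(n/p)^h$ copies \emph{in $T$}: the pairs $(W_{i_s},W_{i_t})$ involved in that particular copy might be among the exceptional non-homogeneous ones. The paper handles both issues at once. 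It applies AFN twice (Lemma~\ref{lem:strong_reg}) to obtain, in addition to the equipartition $\{Q_i\}$, subsets $W_i\subseteq Q_i$ of size $\mathrm{poly}(\varepsilon)n$ such that \emph{every} pair $(W_i,W_j)$ is $\delta$-homogeneous and agrees in direction with $(Q_i,Q_j)$ for most $(i,j)$. It then rounds $T$ to $T'$ by making each $Q_i$ transitive and orienting between parts by the dominant direction; since $T'$ still contains a copy of $H$, that copy meets parts $Q_{i_1},\dots,Q_{i_\ell}$ and induces a partition $H=H_{i_1}\cup\cdots\cup H_{i_\ell}$ into acyclic pieces with $H_{i_s}\rightarrow H_{i_t}$ or vice versa. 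A short counting lemma (Lemma~\ref{lem:count}) then turns the all-pairs homogeneity of the $W_{i_j}$ into $\mathrm{poly}(\varepsilon)n^h$ copies of $H$ in $T$ itself. This replaces your recursion entirely; no inductive step is needed, and the exponent $d$ comes out directly.
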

Throughout this section, we implicitly assume that $n$ is large enough. To make the presentation cleaner, we also implicitly assume that $n$ is divisible by various quantities which depend on the other parameters, $H$ and $\varepsilon$. It is easy to see that in order to establish Theorem \ref{thm:easy}, it is enough to prove it for values of $n$ which satisfy such divisibility conditions.

We start by introducing some definitions and lemmas that we use in the proof of Theorem \ref{thm:easy}.
Let $T$ be a tournament on $[n]$.
The {\em adjacency matrix} of $T$, denoted $A = A(T)$, is the $n \times n$ matrix in which, for every $1 \leq i \neq j \leq n$, $A_{i,j} = 1$ if $(i,j) \in E(T)$ and $A_{i,j} = 0$ if $(j,i) \in E(T)$. The main diagonal of $A$ is set to be $0$. For a pair of disjoint sets $X,Y \subseteq V(T)$ define
\begin{equation*}
e(X,Y) = \left| \left\{(x,y) \in X \times Y : (x,y) \in E(T)\right\} \right|
\end{equation*}
and $d(X,Y) = \frac{e(X,Y)}{|X||Y|}$. Note that $d(X,Y) + d(Y,X) = 1$, as $T$ is a tournament.
We have $X \rightarrow Y$ if and only if $d(X,Y) = 1$, and $Y \rightarrow X$ if and only if
$d(X,Y) = 0$. For a constant $\delta < \frac{1}{2}$, we say that $(X,Y)$ is $\delta$-homogeneous if either
$d(X,Y) \geq 1 - \delta$ or $d(X,Y) \leq \delta$.
We say that the {\em dominant direction} of $(X,Y)$ is $X \rightarrow Y$ if $d(X,Y) \geq \frac{1}{2}$ and is $Y \rightarrow X$ if
$d(X,Y) < \frac{1}{2}$.
The {\em weight} of the pair $(X,Y)$ is $\frac{|X||Y|}{n^2}$.
Let $\mathcal{P} = \{V_1,...,V_r\}$ be a vertex-partition of $T$, namely suppose that
$V(T) = V_1 \uplus \dots \uplus V_r$.
We say that $\mathcal{P}$ is {\em $\delta$-homogeneous} if the total weight of non-$\delta$-homogeneous pairs
$(V_i,V_j)$, $1 \leq i \neq j \leq r$, is at most $\delta$.
We say that $\mathcal{P}$ is an {\em equipartition} if
$\big| |V_i| - |V_j| \big| \leq 1$ for every $1 \leq i,j \leq r$.

Recall the definition of a bipartite tournament from Section 2. A copy of a bipartite tournament
$F = (M \cup N, E)$ in a tournament $T$ is an injection $f : V(F) \rightarrow V(T)$ such that for every $x \in M, y \in N$ it holds that $(x,y) \in E(F)$ if and only if
$(f(x),f(y)) \in E(T)$. The first ingredient in the proof of Theorem \ref{thm:easy} is the following lemma.

\begin{lemma}\label{lem:strong_reg}
There is $C > 0$ such that the following holds for every integer $k \geq 1$ and every
$\delta \in (0,1/2)$. Let $F = (M \cup N, E)$ be a bipartite tournament with $|M| = |N| = k$. Then every tournament $T$ on $n \geq n_0(k,\delta)$ vertices either contains at least
$\left( \delta/k \right)^{Ck^3} n^{2k}$ copies of $F$ or satisfies the following: there is an equipartition
$\mathcal{Q} = \{Q_1,...,Q_q\}$ of $V(T)$, where
$q \geq \frac{1}{\delta}$, and there are subsets $W_i \subseteq Q_i$, such that the following hold.
\begin{enumerate}
\item For all but at most $\delta q^2$ of the pairs
$1 \leq i < j \leq q$, it holds that $(Q_i,Q_j)$ is $\delta$-homogeneous and the dominant direction of $(W_i,W_j)$ is the same as that of $(Q_i,Q_j)$.
\item $(W_i,W_j)$ is $\delta$-homogeneous for every $1 \leq i < j \leq q$.
\item $|W_i| \geq \left( \delta / k \right)^{Ck^2} n$ for every
$1 \leq i \leq q$.
\end{enumerate}
\end{lemma}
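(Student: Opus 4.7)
The plan is a two-stage argument: first produce the equipartition $\mathcal{Q}$ satisfying condition~1, and then extract the subsets $W_i$ satisfying conditions~2 and~3, arranging that failure in either stage yields the required number of copies of $F$.

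For the first stage, I would apply a standard regularity-type lemma for tournaments to obtain an equipartition $\mathcal{Q} = \{Q_1,\dots,Q_q\}$ with $q \geq 1/\delta$ parts, such that all but a small fraction of the pairs $(Q_i,Q_j)$ are $\delta'$-regular for some $\delta' \ll \delta$. The key observation is that a $\delta'$-regular pair $(Q_i,Q_j)$ with density in $[\delta,1-\delta]$ contains at least $(\delta/k)^{O(k^2)} (|Q_i||Q_j|)^{k}$ copies of $F$ by a direct counting argument, since the bipartite tournament $F$ has only $k^2$ ordered edges between its two sides. Summing over such pairs, either there are at most $\delta q^2$ of them, in which case condition~1 already holds on $\mathcal{Q}$, or the total copy count exceeds $(\delta/k)^{Ck^3} n^{2k}$ for a suitable constant $C$, and we are done.

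For the second stage, I must select $W_i \subseteq Q_i$ of size at least $(\delta/k)^{Ck^2} n$ which simultaneously preserves the dominant direction across the $\delta$-homogeneous pairs (second half of condition~1) and achieves $\delta$-homogeneity for \emph{every} pair $(W_i, W_j)$ (condition~2). Here I would invoke the Alon--Fischer--Newman matrix regularity lemma, applied to the adjacency submatrix of $T$ restricted to the non-$\delta$-homogeneous pairs of $\mathcal{Q}$. The AFN lemma delivers, at only a polynomial cost in the subset sizes, ``representative'' subsets $W_i \subseteq Q_i$ for which the induced densities $d(W_i,W_j)$ are uniformly extreme (close to $0$ or $1$) across all pairs. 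Direction-preservation on the $\delta$-homogeneous pairs of $\mathcal{Q}$ is then automatic from representativeness.

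The main obstacle is precisely the handling of those pairs $(i,j)$ for which $(Q_i,Q_j)$ is \emph{not} $\delta$-homogeneous: on these pairs the density lies in the middle range, and we must descend into sub-subsets on which the density swings toward $0$ or $1$. An iterative refinement along AFN lines---selecting vertices with atypical out-neighborhoods in the relevant $W_j$ and intersecting to amplify the bias---either terminates with the desired $W_i$'s or, should it fail to make the pair homogeneous, exposes a quasi-random mixed bipartite substructure from which a direct count gives at least $(\delta/k)^{Ck^3} n^{2k}$ copies of $F$. Balancing the size loss against the iteration depth, while simultaneously maintaining conditions~1 and~2, is the delicate point, and it is exactly where the polynomial (rather than tower-type) dependencies in the AFN matrix regularity lemma are indispensable.
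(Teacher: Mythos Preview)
Your two-stage outline has the right ingredients, but there is a structural gap in stage~1 that breaks the argument. You propose to obtain $\mathcal{Q}$ via a ``standard regularity-type lemma'' giving $\delta'$-regular pairs, and then count copies of $F$ inside middle-density regular pairs. The problem is that Szemer\'edi-type regularity only bounds $q$ by a tower function of $1/\delta$. This is fatal in two places. First, since $W_i \subseteq Q_i$ and $|Q_i| = n/q$, condition~3 forces $q \leq (k/\delta)^{Ck^2}$, which a tower-type $q$ cannot satisfy. Second, your counting in stage~1 gives roughly $(\delta/k)^{O(k^2)}(n/q)^{2k}$ copies of $F$ per middle-density pair; summing over $\delta q^2$ such pairs yields a total of order $(\delta/k)^{O(k^2)} n^{2k} q^{2-2k}$, which for $k \geq 2$ tends to zero as $q$ grows, so you never reach $(\delta/k)^{Ck^3}n^{2k}$. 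You note at the end that the polynomial dependencies of Alon--Fischer--Newman are indispensable, but you only invoke AFN in stage~2; it is equally indispensable in stage~1.

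The paper's proof fixes exactly this by applying the AFN dichotomy (packaged as Lemma~\ref{lem:reg}) \emph{twice}: once with parameter $\delta/5$ and the trivial partition to produce $\mathcal{Q}$ with $q \leq (k/\delta)^{O(k)}$ parts, and then again with the much smaller parameter $\gamma = 1/(2q^4)$ (still polynomial in $\delta/k$ because $q$ is) and $\mathcal{P} = \mathcal{Q}$, to produce a $\gamma$-homogeneous refinement $\mathcal{W}$. The sets $W_i$ are then obtained not by iterative refinement but by \emph{random sampling}: choose $W_i \in \mathcal{W}$ inside $Q_i$ with probability proportional to $|W_i|$, and use a union bound (for condition~2) together with Markov's inequality (for the direction-agreement part of condition~1) to show that some choice works. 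Your stage~2 description of AFN as directly returning ``representative subsets $W_i$'' with extreme pairwise densities is not what AFN provides; it gives a homogeneous \emph{partition}, and the extraction of a single $W_i$ per $Q_i$ with all pairs simultaneously homogeneous requires the additional random-selection step.
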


\noindent
Throughout this section, $C$ denotes the constant from Lemma \ref{lem:strong_reg}.
Another ingredient in the proof of Theorem \ref{thm:easy} is the following simple counting lemma.
\begin{lemma}\label{lem:count}
For every $h$ there are $\eta = \eta(h)$ and
$\alpha = \alpha(h) > 0$ such that the following holds for every oriented graph $H$ on $h$ vertices. Let $H_1,...,H_{\ell}$ be a partition of $H$ such that $H_1,...,H_{\ell}$ induce acyclic digraphs, and for every $1 \leq i < j \leq \ell$, either $H_i \rightarrow H_j$ or $H_j \rightarrow H_i$. Let $W_1,...,W_{\ell}$ be pairwise-disjoint vertex sets in a tournament $T$ having the following properties:
\begin{enumerate}
	\item $|W_i| \geq 2^{h-1}$ for every $1 \leq i \leq \ell$.
	\item For every $1 \leq i \neq j \leq \ell$, if $H_i \rightarrow H_j$ then
	$d(W_i,W_j) \geq 1 - \eta$.
\end{enumerate}
Then $T$ contains at least
$\alpha \cdot \prod_{i=1}^{\ell}{|W_i|^{h_i}}$ copies of $H$, where
$h_i = |H_i|$.
\end{lemma}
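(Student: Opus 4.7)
The proof has two stages. In the first stage, we show that each induced sub-tournament $T[W_i]$ contains at least $N_i \geq c_h |W_i|^{h_i}$ labeled copies of $H_i$, where $c_h = c_h(h) > 0$. This uses that $H_i$ is acyclic and hence embeds into any transitive $h_i$-tournament, together with Claim~\ref{claim:Ramsey}, which guarantees that every $2^{h-1}$-element subset of $W_i$ (noting $2^{h-1} \geq 2^{h_i-1}$) spans a transitive sub-tournament on $h_i$ vertices and therefore contains at least one labeled copy of $H_i$. Double-counting pairs $(S,f)$, where $S \subseteq W_i$ has size $2^{h-1}$ and $f$ is a copy of $H_i$ contained in $S$, then yields
\[
N_i \;\geq\; \binom{|W_i|}{2^{h-1}} \Big/ \binom{|W_i|-h_i}{2^{h-1}-h_i} \;=\; \binom{|W_i|}{h_i}\Big/\binom{2^{h-1}}{h_i} \;\geq\; c_h\,|W_i|^{h_i}.
\]

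\textbf{Stage two: cross edges.} For each $i$, pick $\phi_i \colon V(H_i) \to W_i$ uniformly at random from the $N_i$ copies of $H_i$ in $T[W_i]$, independently across $i$, and set $\phi = (\phi_1,\ldots,\phi_\ell) \colon V(H) \to V(T)$. Since the $W_i$ are pairwise disjoint, $\phi$ is automatically injective, and by construction every within-part edge of $H$ is preserved; only the cross edges remain to check. Fix such an edge $(x,y) \in E(H)$ with $x \in V(H_i)$ and $y \in V(H_j)$; the partition hypothesis forces $H_i \to H_j$, and the assumption of the lemma then gives $d(W_j,W_i) \leq \eta$. Using the pointwise bound $\Pr[\phi_i(x)=u] \leq |W_i|^{h_i-1}/N_i \leq 1/(c_h|W_i|)$ (and the analogous bound for $\phi_j(y)$), together with the independence of $\phi_i$ and $\phi_j$, we obtain
\[
\Pr[(\phi_j(y),\phi_i(x)) \in E(T)] \;\leq\; \frac{e(W_j,W_i)}{c_h^2\,|W_i|\,|W_j|} \;\leq\; \frac{\eta}{c_h^2}.
\]
Choosing $\eta = \eta(h) := c_h^2/(2h^2)$ and taking a union bound over the at most $h^2$ cross edges shows that $\phi$ is an embedding of $H$ with probability at least $1/2$. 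Hence $T$ contains at least $\tfrac{1}{2}\prod_i N_i \geq \tfrac{1}{2}c_h^h \prod_i |W_i|^{h_i}$ labeled copies of $H$ of the prescribed form, and we may set $\alpha(h) := c_h^h/2$.

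\textbf{Main obstacle.} The only delicate point is that, for a uniformly random copy $\phi_i$ of $H_i$ in $T[W_i]$, the marginal distribution of $\phi_i(x)$ is not uniform on $W_i$. The key observation that unlocks the union bound is that this marginal is nevertheless pointwise dominated by $1/(c_h|W_i|)$: combining the trivial upper bound $|W_i|^{h_i-1}$ on the number of copies of $H_i$ fixing $\phi_i(x) = u$ with the Ramsey-based lower bound on $N_i$ from the first stage gives the required domination, after which the density hypothesis together with a union bound handles all cross edges simultaneously.
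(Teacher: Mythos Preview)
Your proof is correct. The overall strategy---Ramsey inside each $W_i$, a union bound over cross edges using the density hypothesis, and a double-counting step to extract the polynomial count---matches the paper's, but you have reorganized the argument in a way worth noting.

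The paper samples a uniformly random $m$-subset $X_i\subseteq W_i$ with $m=2^{h-1}$, shows via linearity of expectation and Markov that with probability at least $1/2$ every pair $(X_i,X_j)$ is completely oriented in the direction dictated by $(H_i,H_j)$, and only then invokes Claim~\ref{claim:Ramsey} inside each $X_i$ to locate a transitive $h_i$-set hosting $H_i$; the final count comes from the identity $\binom{|W_i|}{m}/\binom{|W_i|-h_i}{m-h_i}=\binom{|W_i|}{h_i}/\binom{m}{h_i}$. Because the $X_i$ are uniform subsets, the marginals are automatically uniform and no domination argument is needed. You instead front-load the Ramsey step, first counting copies of $H_i$ in $T[W_i]$ by the same binomial identity, and then sample a random copy $\phi_i$; this forces you to confront the non-uniform marginal of $\phi_i(x)$, which you handle with the pointwise bound $\Pr[\phi_i(x)=u]\le |W_i|^{h_i-1}/N_i\le 1/(c_h|W_i|)$. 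Both routes yield the same constants up to harmless factors; the paper's ordering is slightly slicker in that it sidesteps the marginal issue entirely, while your ordering makes the two stages (within-part embedding, cross-edge verification) more cleanly separated.
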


Recall the definition of a completion of a bipartite tournament from Section $2$. We say that a bipartite tournament $F$ {\em forces} an oriented graph $H$ if every completion of $F$ contains a copy of $H$. The following lemma is the last ingredient we need for the proof of Theorem \ref{thm:easy}.

\begin{lemma}\label{lem:bipartite_force}
For every $2$-colorable oriented graph $H$ there is a bipartite tournament that forces $H$.
\end{lemma}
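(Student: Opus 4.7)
The plan is to obtain Lemma \ref{lem:bipartite_force} as an immediate corollary of Lemma \ref{lem:force}, which has already been proved in the excerpt with exactly this kind of application in mind. Concretely, I would apply Lemma \ref{lem:force} with $k = 2$ and with $D$ taken to be the oriented graph on $[2]$ having no edges at all. Since $H$ is assumed to be $2$-colorable, there is a partition $V(H) = H_1 \cup H_2$ with $H[H_1]$ and $H[H_2]$ acyclic, and the additional hypothesis of Lemma \ref{lem:force} on $D$ (namely $H_i \rightarrow H_j$ for every $(i,j) \in E(D)$) is then vacuous. Thus for any $m \geq m_0(h)$ the lemma produces a $2$-partite (i.e.\ bipartite) tournament $F = (V_1 \cup V_2, E(F))$ with $|V_1| = |V_2| = m$, such that every completion of $F$ contains at least $\gamma m^2$ copies of $H$.

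Choosing $m$ large enough that $\gamma m^2 \geq 1$ (any $m \geq m_0(h)$ will do, since $\gamma$ and $m_0$ depend only on $h$), we conclude that every completion of $F$ contains at least one copy of $H$, i.e.\ $F$ forces $H$ in the sense defined just before the lemma statement. This is exactly the conclusion we need.

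There is essentially no obstacle to overcome beyond verifying that the parameters line up: the key point is that the $k = 2$, $E(D) = \emptyset$ case of Lemma \ref{lem:force} is precisely a statement about forcing bipartite tournaments, so the whole content of Lemma \ref{lem:bipartite_force} has already been absorbed into the earlier random-bipartite-tournament construction. In particular, we do not need the full strength of part (3) of Lemma \ref{lem:force} (the lower bound $\gamma m^2$ on the number of edge-disjoint copies of $H$) — we only use the existence of a single copy in every completion — but the quantitative form is harmless and will in fact be useful when the lemma is invoked elsewhere in Section \ref{sec:easy} to control the number of copies of $H$ produced inside a regular partition.
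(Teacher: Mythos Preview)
Your proposal is correct and matches the paper's own proof essentially verbatim: the paper also applies Lemma~\ref{lem:force} with $k=2$ and $D$ the empty oriented graph on $[2]$, takes $m = m_0(h)$, and notes that at least $\gamma(h)m^2$ (hence at least one) copy of $H$ appears in every completion. The only minor inaccuracy is your closing remark that the quantitative bound is reused later in Section~\ref{sec:easy}; in fact only the qualitative forcing is used there, while the full edge-disjoint count from Lemma~\ref{lem:force} is exploited in Section~\ref{sec:hard}.
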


\begin{proof}[Proof of Theorem \ref{thm:easy}]
Let $H$ be a $2$-colorable oriented graph on $h$ vertices.
Apply Lemma \ref{lem:bipartite_force} to get a bipartite tournament
$F = (M \cup N, E)$ that forces $H$. Note that we can clearly assume that $|M| = |N| \geq h$ (by adding additional vertices if necessary). Put
$k := |M| = |N|$.

\noindent
We will prove the theorem with
\begin{equation*}
\varepsilon_0 = \varepsilon_0(h) = \min\left( \frac{1}{3k}, 3\eta(h) \right)
\end{equation*}
and
\begin{equation*}
d = d(h) = 2Ck^3 + \alpha^{-1},
\end{equation*}
where  $\eta(h)$ and $\alpha = \alpha(h)$ are from Lemma \ref{lem:count}, and $C$ is the constant from Lemma \ref{lem:strong_reg}.

Let $\varepsilon < \varepsilon_0$, and let $T$ be any tournament on $n$ vertices which is $\varepsilon$-far from being $H$-free.
Assume first that $T$ contains at least
$\left( \varepsilon/3k \right)^{Ck^3} n^{2k}$ copies of $F$. Since $F$ forces $H$, every copy of $F$ (in a tournament) contains a copy of $H$. Every copy of $H$ is contained in at most $n^{2k-h}$ copies of $F$. Recalling that $\varepsilon < \frac{1}{3k}$, we conclude that $T$ contains at least
$$n^{-(2k-h)} \left( \varepsilon/3k \right)^{Ck^3} n^{2k} =
\left( \varepsilon/3k \right)^{Ck^3} n^h \geq
\varepsilon^{2Ck^3} n^h \geq \varepsilon^d n^h$$
copies of $H$, giving the desired result in this case.

Suppose from now on that $T$ contains less than
$\left( \varepsilon/3k \right)^{Ck^3} n^{2k}$ copies of $F$.
We apply Lemma \ref{lem:strong_reg} to $T$ with approximation parameter
$\frac{\varepsilon}{3}$ to get an equipartition
$\mathcal{Q} = \{Q_1,...,Q_q\}$ and subsets $W_i \subseteq Q_i$ with the properties stated in the lemma.
Define $\N$ to be the set of pairs
$1 \leq i < j \leq q$ for which either (a) $(Q_i,Q_j)$ is not $\frac{\varepsilon}{3}$-homogeneous, or (b) the dominant direction of $(W_i,W_j)$ is not the same as that of $(Q_i,Q_j)$.
By Lemma \ref{lem:strong_reg} we have $\left| \N \right| \leq \frac{\varepsilon}{3} q^2$. This implies that
	\begin{equation}\label{eq:bad_pairs}
	\sum_{(i,j) \in \N}{|Q_i||Q_j|} \leq
	\frac{\varepsilon}{3} q^2 \left( \frac{n}{q} \right)^2 = \frac{\varepsilon}{3} n^2.
	\end{equation}
\noindent	
Let $T'$ be the tournament obtained from $T$ by making the following changes.
\begin{enumerate}
\item Make $Q_i$ transitive for every $i = 1,\dots,q$.
\item For every $1 \leq i < j \leq q$, if $d(W_i,W_j) \geq 1 - \frac{\varepsilon}{3}$ then set
$Q_i \rightarrow Q_j$ and if $d(W_i,W_j) \leq \frac{\varepsilon}{3}$ then set
$Q_j \rightarrow Q_i$.
\end{enumerate}

By Lemma \ref{lem:strong_reg}, $(W_i,W_j)$ is $\frac{\varepsilon}{3}$-homogeneous for every
$1 \leq i < j \leq q$, so Item 2 covers all options.
The number of edge-reversals made in Item 1 is at most
$q \binom{n/q}{2} < \frac{n^2}{q} \leq \frac{\varepsilon}{3} n^2$. Here we use the inequality
$q \geq \frac{3}{\varepsilon}$, given by Lemma \ref{lem:strong_reg}.
In Item 2, if $(i,j) \notin \N$ then the number of reversals of edges between $Q_i$ and $Q_j$ is at most $\frac{\varepsilon}{3} |Q_i||Q_j|$. Using these facts and (\ref{eq:bad_pairs}) we get that the total number of edge-reversals made in Items 1 and 2 is less than
$\frac{\varepsilon}{3} n^2 + \sum_{i < j}{\frac{\varepsilon}{3} |Q_i||Q_j|} +
\frac{\varepsilon}{3} n^2 \leq
\frac{\varepsilon}{3} n^2 + \frac{\varepsilon}{3} n^2 + \frac{\varepsilon}{3} n^2 = \varepsilon n^2$.

Since $T$ is $\varepsilon$-far from being $H$-free and since $T'$ is obtained from $T$ by reversing less than $\varepsilon n^2$ edges, $T'$ must contain a copy of $H$. Let $Q_{i_1},...,Q_{i_{\ell}}$ be the parts of $\Q$ which intersect this copy.
For $j = 1,...,\ell$ define $H_{i_j} = H \cap Q_{i_j}$ and
$h_{i_j} = |H_{i_j}|$. From the way we constructed $T'$ from $T$ in Items 1 and 2, it follows that the sets $H_{i_1},...,H_{i_{\ell}}$ are acyclic, and that for every $1 \leq s < t \leq \ell$ we have either
$H_{i_s} \rightarrow H_{i_t}$ or $H_{i_t} \rightarrow H_{i_s}$.
Moreover, for every $1 \leq s \neq t \leq \ell$, if $H_{i_s} \rightarrow H_{i_t}$ then $Q_{i_s} \rightarrow Q_{i_t}$ in $T'$, implying
$d(W_{i_s},W_{i_t}) \geq 1 - \frac{\varepsilon}{3} \geq 1 - \eta(h)$ in $T$ (see our choice of $\varepsilon_0$). Finally, by Lemma \ref{lem:strong_reg} we have
\begin{equation}\label{eq:W_part_bound}
|W_{i_j}| \geq \left( \varepsilon / 3k \right)^{Ck^2} n
\end{equation}
for every $j = 1,\dots,\ell$.
So if $n$ is large enough then $|W_{i_j}| \geq 2^{h-1}$. We conclude that $W_{i_1},\dots,W_{i_{\ell}}$
satisfy the conditions of Lemma \ref{lem:count} {\em in the tournament $T$} with respect to the partition $V(H) = H_{i_1} \cup \dots \cup H_{i_{\ell}}$.
By applying Lemma \ref{lem:count} and using the inequalities (\ref{eq:W_part_bound}) and
$\varepsilon < \frac{1}{3k}$, we get that $T$ contains at least
\begin{equation*}
\alpha \cdot
\prod_{j=1}^{\ell}{ \left| W_{i_j} \right|^{h_{i_j}}} \geq
\alpha \left( \varepsilon / 3k \right)^{Chk^2} n^h \geq
\varepsilon^{1/\alpha} \left( \varepsilon / 3k \right)^{Ck^3} n^h \geq
\varepsilon^{2Ck^3 + 1/\alpha}n^h = \varepsilon^d n^h
\end{equation*}
copies of $H$. This completes the proof of the theorem.
\end{proof}

\noindent
Having proven Theorem \ref{thm:easy}, we proceed to prove Lemmas \ref{lem:strong_reg}, \ref{lem:count} and \ref{lem:bipartite_force}.

\begin{proof}[Proof of Lemma \ref{lem:bipartite_force}]
This lemma follows easily from Lemma \ref{lem:force}. Let
$V(H) = H_1 \cup H_2$ be a proper $2$-coloring of $H$. Apply Lemma \ref{lem:force} with parameter $h = |V(H)|$ and with $D$ being the empty graph on $2$ vertices. Lemma \ref{lem:force} implies that there is a bipartite tournament $F = (V_1 \cup V_2, E(F))$, where $|V_1|=|V_2| = m := m_0(h)$, such that every completion of $F$ contains at least $\gamma(h) m^2$ (and in particular at least one) copies of $H$.
\end{proof}

\begin{proof} [Proof of Lemma \ref{lem:count}]
Set $m = 2^{h-1}$. For each
$i = 1,...,\ell$ we choose a subset
$X_i \subseteq W_i$ of size
$m$ uniformly at random. For $1 \leq i < j \leq \ell$, let us say that $(X_i,X_j)$ {\em agrees} with $(H_i,H_j)$ if $X_i \rightarrow X_j$ whenever $H_i \rightarrow H_j$ and $X_j \rightarrow X_i$ whenever $H_j \rightarrow H_i$. By the assumption on the pairs $(W_i,W_j)$, the probability that $(X_i,X_j)$ does not agree with $(H_i,H_j)$ is at most $\eta m^2$. By the union bound, the probability that there is a pair
$1 \leq i < j \leq \ell$ for which $(X_i,X_j)$ does not agree with $(H_i,H_j)$ is at most
$\eta m^2 \binom{\ell}{2} \leq \eta m^2 h^2$. By setting
$\eta(h) = \frac{1}{2}(hm)^{-2} = \frac{1}{2}h^{-2}2^{-2(h-1)}$ we get that this probability is at most $\frac{1}{2}$.

By Claim \ref{claim:Ramsey} and the choice of $m$ we get that $X_i$ contains a transitive subset $Y_i$ of size $h_i = |H_i|$. Therefore, if $(X_i,X_j)$ agrees with $(H_i,H_j)$ for every $1 \leq i < j \leq \ell$ then
$X = \bigcup_{i=1}^{\ell}{X_i}$ contains a copy of $H$ with $Y_i$ playing the role of $H_i$. Every such copy of $H$ is contained in at most
$\prod_{i=1}^{\ell}{\binom{|W_i| - h_i}{m - h_i}}$ such sets $X$.
Therefore there are at least
\begin{equation*}
\frac{\frac{1}{2}\prod_{i = 1}^{\ell}{\binom{|W_i|}{m}}}
{\prod_{i=1}^{\ell}{\binom{|W_i| - h_i}{m - h_i}}} =
\frac{\frac{1}{2}\prod_{i=1}^{\ell}{\binom{|W_i|}{h_i}}}
{\prod_{i=1}^{\ell}{\binom{m}{h_i}}} \geq
\frac{1}{2} \cdot \prod_{i=1}^{\ell}{\left( \frac{|W_i|}{m}\right)^{h_i}}
=
\frac{1}{2} \cdot m^{-h} \prod_{i=1}^{\ell}{|W_i|^{h_i}}
\end{equation*}
copies of $H$. We choose the constant $\alpha = \alpha(h)$ to be
$\alpha = \frac{1}{2}m^{-h} = 2^{-h(h-1)-1}$.
\end{proof}

We now turn to the proof of Lemma \ref{lem:strong_reg}. This lemma is proved using a ``conditional'' regularity lemma for binary matrices, proved by Alon, Fischer and Newman in \cite{bipartite}. Let $A$ be an $n \times n$ matrix with $0/1$ entries whose rows and columns are indexed by $1,...,n$. For a pair of sets
$R,C \subseteq [n]$, we denote by $R \times C$ the submatrix of $A$ with rows from $R$ and columns from $C$ and we call it a {\em block}. The
{\em dominant value} of a block is the value, $0$ or $1$, that appears in at least half of the entries. For a constant $\delta < \frac{1}{2}$, we say that a block is $\delta$-homogeneous if its dominant value appears in at least a $(1 - \delta)$-fraction of the entries. Clearly, if $A$ is the adjacency matrix of a tournament $T$ and
$X,Y \subseteq V(T)$ are disjoint vertex sets, then $(X,Y)$ is $\delta$-homogeneous (in the tournament sense) if and only if the block
$X \times Y$ is $\delta$-homogeneous (in the matrix sense). Moreover, the dominant direction of $(X,Y)$ is $X \rightarrow Y$ if and only if the dominant value of $X \times Y$ is $1$.

The weight of a block $R \times C$ is defined as $\frac{|R||C|}{n^2}$.
Let $\mathcal{R} = \{R_1,...,R_s\}$ and $\mathcal{C} = \{C_1,...,C_t\}$ be partitions of $[n]$. We say that $(\mathcal{R},\mathcal{C})$ is a {\em $\delta$-homogeneous} partition of $A$ if
the total weight of non-$\delta$-homogeneous blocks $R_i \times C_j$ is at most $\delta$.
Note that if $A$ is the adjacency matrix of a tournament $T$ on $V(T) = [n]$, and if $\mathcal{P}$ is a partition of $[n]$ such that $(\mathcal{P},\mathcal{P})$ is a $\delta$-homogeneous partition of $A$, then $\mathcal{P}$ is a $\delta$-homogeneous partition of $T$ (as defined in the beginning of Section \ref{sec:easy})\footnote{The converse is not necessarily true. The fact that $\mathcal{P}$ is a $\delta$-homogeneous partition of $T$ does not take into account ``diagonal" blocks, i.e. blocks of the form $V_i \times V_i$, $V_i \in \mathcal{P}$.}.

Let $B$ be a $0/1$-valued $k \times k$ matrix. A {\em copy} of $B$ in $A$ is a sequence of rows $r_1 < r_2 < \dots < r_k$ and a sequence of columns $c_1 < c_2 < \dots < c_k$ such that $A_{r_i,c_j} = B_{i,j}$ for every
$1 \leq i,j \leq k$. We are now ready to state the Alon-Fischer-Newman Regularity Lemma.
\begin{lemma}[{\bf Alon-Fischer-Newman \cite{bipartite}}] \label{lem:AFN}
There is $c > 0$ such that the following holds for every integer
$k \geq 1$ and every $\delta > 0$. For every $0/1$ matrix $A$ of size $n \times n$ with
$n > \left( k / \delta \right)^{ck}$, either $A$ has a $\delta$-homogeneous partition $(\mathcal{R},\mathcal{C})$ with $|\mathcal{R}|,|\mathcal{C}| \leq \left( k / \delta \right)^{ck}$, or for every $0/1$-valued $k \times k$ matrix $B$, $A$ contains at least
$\left( \delta / k \right)^{ck^2} n^{2k}$ copies of $B$.
\end{lemma}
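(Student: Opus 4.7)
The plan is to prove this lemma by an iterative construction that builds either a small homogeneous partition or many copies of an arbitrary target matrix $B$, working in $k$ stages indexed by the rows and columns of $B$. The approach is structurally similar to an inductive argument on $k$, but carried out through a single simultaneous process on the $n \times n$ matrix $A$.

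At a high level, I would maintain at stage $t \in \{0,1,\dots,k\}$ a pair of partitions $(\mathcal{R}^{(t)},\mathcal{C}^{(t)})$ of $[n]$ together with a ``partial witness'' consisting of distinguished classes $R_1^{*},\dots,R_t^{*}$ and $C_1^{*},\dots,C_t^{*}$, under the invariant that each pairwise block $R_i^{*}\times C_j^{*}$ is $(\delta/k)$-homogeneous with dominant value equal to $B_{ij}$, and each distinguished class has size at least $(\delta/k)^{c_1 t}\,n$ for a suitable constant $c_1$. At the start of each stage one tests whether $(\mathcal{R}^{(t)},\mathcal{C}^{(t)})$ is already $\delta$-homogeneous with at most $(k/\delta)^{ck}$ parts on each side; if so, output it and stop, satisfying the first alternative of the lemma. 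Otherwise non-$\delta$-homogeneous blocks carry total weight more than $\delta$, and one exploits these ``mixed'' blocks --- which contain at least a $(\delta/k)$-fraction of entries of each value --- to refine the partitions by density vectors against the current distinguished classes, discretised at scale $\delta/k$, and to extract new sub-classes $R_{t+1}^{*}, C_{t+1}^{*}$ whose $2t+1$ new cross-blocks satisfy the required homogeneity and dominance conditions read off from $B$.

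After $k$ successful stages, I would count copies of $B$ by selecting one row $r_i$ from each $R_i^{*}$ and one column $c_j$ from each $C_j^{*}$ uniformly at random. Because each block $R_i^{*}\times C_j^{*}$ is $(\delta/k)$-homogeneous with dominant value $B_{ij}$, a union bound over the $k^2$ entries shows that $A_{r_i,c_j}=B_{ij}$ for all $i,j$ with probability at least $1-k^2(\delta/k)\ge 1/2$. Accounting for the ordering constraint $r_1<\cdots<r_k$ and $c_1<\cdots<c_k$ by the factor $1/(k!)^2$, and using the lower bounds $|R_i^{*}|,|C_j^{*}|\ge(\delta/k)^{c_1 k}n$, this yields at least $(\delta/k)^{ck^2}n^{2k}$ copies of $B$, which is the second alternative.

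The main obstacle will be the extension step: at each stage one must simultaneously satisfy $2t+1$ homogeneity-with-prescribed-dominant-value constraints while keeping the partition growth polynomial in $k/\delta$. A naive refinement by the full density vector against all classes of $\mathcal{C}^{(t)}$ would blow up the partition exponentially in $|\mathcal{C}^{(t)}|$, giving only tower-type bounds. The key trick is to process the constraints greedily using the mixed sub-block as a reservoir with many entries of both values: one imposes the $2t+1$ new constraints sequentially, each incurring a multiplicative loss of $(\delta/k)^{O(1)}$ in the size of the extracted sub-class, for a total loss of $(\delta/k)^{O(t)}$ per stage and an overall partition size of $(k/\delta)^{O(k)}$ after $k$ stages. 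This localised, constraint-by-constraint extraction is what preserves the polynomial partition bound across all $k$ stages and distinguishes Alon--Fischer--Newman from Szemer\'edi's regularity lemma.
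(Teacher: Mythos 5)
This lemma is not proved in the paper at all: it is imported verbatim from Alon--Fischer--Newman \cite{bipartite}, so there is no in-paper argument to compare yours against. I can therefore only assess your sketch on its own terms, and as written it has a genuine gap at its central step.

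The extension step is exactly the content of the lemma, and the proposal does not justify it. Your invariant requires the new class $R_{t+1}^{*}$ to have \emph{prescribed} dominant values $B_{t+1,j}$ against the already-fixed classes $C_1^{*},\dots,C_t^{*}$. But the existence of non-$\delta$-homogeneous blocks of total weight more than $\delta$ only tells you that both symbols occur in abundance \emph{inside} those blocks; it says nothing about how the rows of such a block behave against the earlier distinguished column classes. For example, if every row of $A$ has density at least $1-\delta/k$ of ones on $C_1^{*}$ (which your earlier greedy choice of $C_1^{*}$ can easily have produced), then no choice whatsoever of $R_{t+1}^{*}$ has dominant value $0$ against $C_1^{*}$, so any target $B$ with $B_{t+1,1}=0$ is unreachable by the process --- even though $A$ may contain many copies of that $B$ realized on entirely different columns. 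The process can thus stall in a state where it outputs neither a small homogeneous partition nor many copies of $B$. Repairing this requires either backtracking over the witness classes or, as in the actual Alon--Fischer--Newman argument, constructing a single structure rich enough to realize \emph{all} $k\times k$ patterns simultaneously, rather than chasing one prescribed pattern with irrevocable earlier choices.

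Two quantitative points are also asserted rather than proved. Refining by density vectors against $t$ distinguished classes discretised at scale $\delta/k$ multiplies the number of parts by $(k/\delta)^{\Theta(t)}$ at stage $t$, which accumulates over $k$ stages to $(k/\delta)^{\Theta(k^2)}$, not the $(k/\delta)^{O(k)}$ bound that the first alternative of the lemma demands; your ``greedy, constraint-by-constraint'' remark controls the \emph{size} of the extracted classes but not the \emph{number} of parts. Finally, the $1/(k!)^2$ correction in the counting step does not by itself produce copies of $B$: sorting the sampled rows and columns yields a copy of a row/column-permuted version of $B$, and the permutation depends on the (unordered) classes you built for $B$ itself; this is fixable, e.g.\ by forcing the distinguished row classes into $k$ disjoint intervals of $[n]$ from the outset, but it is not addressed in the proposal.
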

Throughout this section $c$ denotes the constant from Lemma \ref{lem:AFN}. Without loss of generality, we always assume that $c \geq 1$. The following lemma is an application of Lemma \ref{lem:AFN} to adjacency matrices of tournaments.

\begin{lemma}\label{lem:reg}
Let $F = (M \cup N, E)$ be a bipartite tournament with $|M| = |N| = k$, and let
$\delta \in (0,1/2)$. Let $T$ be a tournament on
$n \geq n_0\left( k,\delta\right)$ vertices and let $\mathcal{P}$ be an equipartition of $V(T)$. Then either $T$ contains at least
$\left(\delta / 3k \right)^{2ck^2} n^{2k}$ copies of $F$, or $T$ admits a $\delta$-homogeneous equipartition that refines $\mathcal{P}$, and has at least $\delta^{-1}$ and at most
$|\mathcal{P}| \cdot \left( 3k / \delta \right)^{5ck}$ parts.
\end{lemma}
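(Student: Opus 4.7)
The plan is to apply the Alon--Fischer--Newman regularity lemma (Lemma~\ref{lem:AFN}) to the adjacency matrix $A=A(T)$ with parameter $\delta':=\delta/3$. First, I would encode $F$ as a $k\times k$ $0/1$ matrix $B$ with $B_{i,j}=1$ iff the edge between the $i$-th vertex of $M$ and the $j$-th vertex of $N$ is oriented from $M$ to $N$. Lemma~\ref{lem:AFN} then yields either (a) at least $(\delta'/k)^{ck^2}n^{2k}=(\delta/3k)^{ck^2}n^{2k}$ copies of $B$ in $A$, or (b) a $\delta'$-homogeneous partition $(\mathcal{R},\mathcal{C})$ of the rows/columns of $A$ with $|\mathcal{R}|,|\mathcal{C}|\le (3k/\delta)^{ck}$.

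In case (a), every copy of $B$ is a sorted pair $(r_1<\dots<r_k,\, c_1<\dots<c_k)$ with $A_{r_i,c_j}=B_{i,j}$. Whenever all $2k$ indices are distinct, setting $f(u_i):=r_i$ and $f(v_j):=c_j$ yields a distinct copy of $F$ in $T$. The ``collision'' configurations (some $r_i=c_j$) number at most $k^2n^{2k-1}$, which is absorbed for $n\ge n_0(k,\delta)$, leaving at least $\tfrac12(\delta/3k)^{ck^2}n^{2k}\ge (\delta/3k)^{2ck^2}n^{2k}$ copies of $F$; the last inequality uses $(\delta/3k)^{ck^2}\le 1/2$, which holds since $\delta/(3k)\le 1/6$.

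In case (b), I would form the common refinement $\mathcal{P}^{\star}$ of $\mathcal{P}$, $\mathcal{R}$, $\mathcal{C}$ (with $|\mathcal{P}^{\star}|\le |\mathcal{P}|(3k/\delta)^{2ck}$), set $q:=\lceil 6|\mathcal{P}^{\star}|/\delta\rceil$, and construct an equipartition $\mathcal{Q}$ with exactly $q$ parts by processing each $P\in \mathcal{P}$ separately. Inside each $P$, and for every $V\in \mathcal{P}^{\star}$ with $V\subseteq P$, I pack as many disjoint blocks of size $n/q$ into $V$ as possible, collect the residues (each of size $<n/q$) inside $P$, and split them into further ``auxiliary'' blocks of size $n/q$ still inside $P$. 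The resulting $\mathcal{Q}$ refines $\mathcal{P}$, has $q/|\mathcal{P}|$ parts inside each $P\in \mathcal{P}$, and its union $L$ of auxiliary parts satisfies $|L|\le |\mathcal{P}^{\star}|\cdot n/q\le \delta n/6$.

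To verify $\delta$-homogeneity: a ``proper'' pair $(Q_i,Q_j)$ of $\mathcal{Q}$ (both contained in some $V,V'\in \mathcal{P}^{\star}$) sits inside a single $(\mathcal{R},\mathcal{C})$-block and so inherits $\delta'$-homogeneity from its parent block. Non-$\delta'$-homogeneous $(\mathcal{R},\mathcal{C})$-blocks contribute weight at most $\delta'=\delta/3$, while pairs involving auxiliary parts contribute weight at most $2|L|/n\le \delta/3$, so the total non-$\delta$-homogeneous weight is at most $2\delta/3<\delta$. Finally $q\ge 6/\delta>1/\delta$ and $q\le |\mathcal{P}|(3k/\delta)^{5ck}$ (a short calculation using $\delta<1/2$, since $(3k/\delta)^{3ck}\ge 7/\delta$). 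The main obstacle I anticipate is the equitisation step: arranging the residues inside each $P\in \mathcal{P}$ so that $\mathcal{Q}$ is a genuine equipartition of size exactly $q$, refines $\mathcal{P}$, and has its extra ``bad'' weight absorbed by the $\delta/3$ slack built into the choice of $\delta'$.
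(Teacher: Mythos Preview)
There is a genuine gap in your homogeneity verification. You write that a ``proper'' pair $(Q_i,Q_j)$, sitting inside some block $R\times C$ of the partition $(\mathcal{R},\mathcal{C})$, ``inherits $\delta'$-homogeneity from its parent block''. This is false: $\delta'$-homogeneity of $R\times C$ is an \emph{average} statement, and a sub-block $Q_i\times Q_j\subseteq R\times C$ can be arbitrarily far from homogeneous. For instance, all of the at most $\delta'|R||C|$ non-dominant entries of $R\times C$ could concentrate entirely inside one $Q_i\times Q_j$. So your accounting $\delta/3+\delta/3$ for the non-homogeneous weight is missing a third, essential term: the weight of proper pairs lying inside $\delta'$-homogeneous parent blocks that are nevertheless not $\delta$-homogeneous themselves.

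The paper fixes exactly this point by applying Lemma~\ref{lem:AFN} with the stronger parameter $\delta^2/3$ rather than $\delta/3$. If $R\times C$ is $(\delta^2/3)$-homogeneous with dominant value $1$, and $\gamma(i,j)$ denotes the fraction of $0$-entries in the sub-block $X_i\times Y_j$, then $\sum_{i,j}\frac{|X_i||Y_j|}{|R||C|}\gamma(i,j)\le \delta^2/3$, and Markov's inequality gives that the relative weight of sub-blocks with $\gamma(i,j)>\delta$ is at most $\delta/3$. Summing over all homogeneous parent blocks yields a third contribution of at most $\frac{\delta}{3}n^2$, which combines with the two $\frac{\delta}{3}n^2$ contributions you already identified. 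The cost of the stronger parameter is only that $|\mathcal{R}|,|\mathcal{C}|\le (3k/\delta^2)^{ck}$, and the exponent $5ck$ in the statement is chosen to absorb this. Your equitisation scheme and your treatment of case~(a) are essentially the same as the paper's; the sole missing ingredient is this averaging/Markov step, forced by the choice of $\delta^2/3$.
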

As the proof of Lemma \ref{lem:reg} is rather technical, we leave it to the end of this section and first show how to deduce Lemma \ref{lem:strong_reg} by two applications of Lemma \ref{lem:reg}.
\begin{proof}[Proof of Lemma \ref{lem:strong_reg}]
Let $C$ be large enough so that
\begin{equation*}
(\delta/k)^{C} \leq (\delta/15k)^{110c^2},
\end{equation*}
where $c \geq 1$ is the constant from Lemma \ref{lem:AFN}. Note that $C$ does not depend on $\delta$ or $k$, as $\delta$ is assumed to be less than $\frac{1}{2}$.

We assume that $T$ contains less than $(\delta/k)^{Ck^3}n^{2k}$ copies of $F$ and prove that the other alternative in the statement of the lemma holds. Our choice of $C$ implies that $T$ contains less than $(\delta/15k)^{2ck^2}n^{2k}$ copies of $F$.
By applying Lemma \ref{lem:reg} with approximation parameter $\frac{\delta}{5}$ and
$\mathcal{P} = \{V(T)\}$, we get that $T$ admits a
$\frac{\delta}{5}$-homogeneous equipartition $\Q = \{Q_1,...,Q_q\}$ with
\begin{equation}\label{eq:partition_size_1}
\delta^{-1} \leq q \leq (15k/\delta)^{5ck}.
\end{equation}

\noindent
Set $\gamma = \frac{1}{2q^4}$, and note that
$\gamma \geq \frac{1}{2}(\delta/15k)^{20ck}$, and hence
$$\left( \gamma/3k \right)^{2ck^2} \geq
\left( (1/6k) \cdot (\delta/15k)^{20ck} \right)^{2ck^2} \geq
\left( (\delta/15k)^{21ck} \right)^{2ck^2} \geq
(\delta/15k)^{42c^2k^3}.$$
Our assumption in the beginning of the proof and our choice of $C$ imply that $T$ contains less than
$\left( \gamma/3k \right)^{2ck^2} n^{2k}$ copies of $F$.
Apply Lemma \ref{lem:reg} to $T$ again, now with approximation parameter
$\gamma$ and $\mathcal{P} = \mathcal{Q}$, to obtain a
$\gamma$-homogeneous equipartition $\mathcal{W}$ which refines $\Q$ and satisfies
\begin{equation}\label{eq:partition_size_2}
\left| \W \right| \leq
\left| \mathcal{Q} \right| \cdot \left( 3k/\gamma \right)^{5ck} \leq
(15k/\delta)^{5ck} \cdot \left( 6k \cdot (15k/\delta)^{20ck} \right)^{5ck} \leq
(15k/\delta)^{110c^2k^2}.
\end{equation}

For each $1 \leq i \leq q$ define
$\mathcal{W}_i = \{W \in \mathcal{W} : W \subseteq Q_i\}$. Sample a vertex $w_i \in Q_i$ uniformly at random and let $W_i \in \mathcal{W}_i$ be such that
$w_i \in W_i$. By (\ref{eq:partition_size_2}) and our choice of $C$, we have
$|\mathcal{W}| \leq (k/\delta)^{Ck^2}$, which implies that
$|W_i| \geq (\delta/k)^{Ck^2}n$ for every $1 \leq i \leq q$, as required.
To complete the proof, we show that with positive probability, $W_1,...,W_q$ satisfy the assertions of Items 1 and 2 of the lemma.

Let $A_1$ be the event that $(W_i,W_j)$ is $\delta$-homogeneous for every $1 \leq i < j \leq q$.
Fixing $1 \leq i < j \leq q$, the probability that $(W_i,W_j)$ is not $\delta$-homogeneous is
$\sum{\frac{|W||W'|}{|Q_i||Q_j|}} =
\left( \frac{q}{n} \right)^2\sum{|W||W'|}$,
where the sum is over all
non-$\delta$-homogeneous pairs
$(W,W') \in \mathcal{W}_i \times \mathcal{W}_j$. This sum is not larger than
$\gamma q^2 = \frac{1}{2q^2}$ because $\mathcal{W}$ is $\gamma$-homogeneous and by our choice of $\gamma$. By the union bound over all pairs
$1 \leq i < j \leq q$, we get that
$\mathbb{P}\left[ A_1 \right] \geq \frac{1}{2}$.

Let $1 \leq i < j \leq q$ be such that $(Q_i,Q_j)$ is a $\frac{\delta}{5}$-homogeneous pair. We say that $(Q_i,Q_j)$ is {\em bad} if
$d(Q_i,Q_j) \geq 1 - \frac{\delta}{5}$ but $d(W_i,W_j) \leq \delta$, or $d(Q_i,Q_j) \leq \frac{\delta}{5}$ but $d(W_i,W_j) \geq 1 - \delta$.  Otherwise $(Q_i,Q_j)$ is {\em good}. Assume without loss of generality that
$d(Q_i,Q_j) \geq 1 - \frac{\delta}{5}$. Then the probability that $d(W_i,W_j) \leq \delta$ is at most
$\frac{\delta/5}{1 - \delta} < \frac{2\delta}{5}$ (here we use $\delta < \frac{1}{2}$). We conclude that the probability that a given pair $(Q_i,Q_j)$ is bad is less than $\frac{2\delta}{5}$.
Let $Z$ be the number of bad pairs $(Q_i,Q_j)$.
Let $A_2$ be the event that $Z \leq \frac{4 \delta}{5}q^2$.
We have $\mathbb{E}[Z] < \frac{2\delta}{5}q^2$.
By Markov's inequality, we have
$\mathbb{P}[Z > \frac{4\delta}{5}q^2] < \frac{1}{2}$, implying that
$\mathbb{P}[A_2] > \frac{1}{2}$.

So far we showed that with positive probability, both $A_1$ and $A_2$ happen. We now show that if $A_1$ and $A_2$ happen then Items 1 and 2 in the lemma hold. Item 2 holds because $A_1$ happened.
For Item 1, notice that if $(Q_i,Q_j)$ is $\frac{\delta}{5}$-homogeneous and good, and if $(W_i,W_j)$ is $\delta$-homogeneous, then $(W_i,W_j)$ has the same dominant direction as $(Q_i,Q_j)$. Thus, if a pair $1 \leq i < j \leq q$ violates Item 1, then either $(Q_i,Q_j)$ is not $\frac{\delta}{5}$-homogeneous or $(Q_i,Q_j)$ is bad. Since $\mathcal{Q}$ is a $\frac{\delta}{5}$-homogeneous equipartition, the number of non-$\frac{\delta}{5}$-homogeneous pairs $(Q_i,Q_j)$ is at most $\frac{\delta}{5}q^2$. Since $A_2$ happened, the number of bad pairs $(Q_i,Q_j)$ is at most $\frac{4\delta}{5}q^2$. Thus, Item 1 holds for all but at most $\delta q^2$ of the pairs $(Q_i,Q_j)$, as required.
\end{proof}

\begin{proof}[Proof of Lemma \ref{lem:reg}]
Let us assume that $T$ contains less than
$\left(\delta / 3k \right)^{2ck^2} n^{2k}$ copies of $F$. Our goal is to show that $T$ admits a $\delta$-homogeneous equipartition which refines $\mathcal{P}$, and has
at least $\delta^{-1}$ and at most $|\mathcal{P}| \cdot \left( 3k / \delta \right)^{5ck}$ parts.

%

Define $B$ to be the {\em bipartite adjacency matrix} of $F$; that is, $B$ is a $k \times k$ matrix, indexed by $M \times N$, in which $B_{x,y} = 1$ if $(x,y) \in E(F)$ and $B_{x,y} = 0$ if $(y,x) \in E(F)$. We claim that
$A = A(T)$, the adjacency matrix of $T$, contains less than
$\left(\delta^2 / 3k \right)^{ck^2} {m}^{2k}$ copies of $B$. Assume otherwise. A copy of $B$ which does not intersect the main diagonal of $A$ corresponds to a copy of $F$ in $T$. There can be no more than $O(n^{2k-1})$ copies of $B$ which intersect the main diagonal of $A$. Recalling that
$\delta < \frac{1}{2}$ and assuming $n$ to be large enough, we conclude that $T$ contains at least
$\left(\delta^2 / 3k \right)^{ck^2} n^{2k} - O(n^{2k-1}) \geq
\left(\delta / 3k \right)^{2ck^2} n^{2k}$
copies of $F$, in contradiction to our assumption in the beginning of the proof.

Thus, $A = A(T)$ contains less than
$\left(\delta^2 / 3k \right)^{ck^2} n^{2k}$ copies of $B$.
By Lemma \ref{lem:AFN}, applied with approximation parameter $\frac{\delta^2}{3}$, $A$ has a $\frac{\delta^2}{3}$-homogeneous partition $(\mathcal{R},\mathcal{C})$ with
$|\mathcal{R}|,|\mathcal{C}| \leq \left( 3k / \delta^2 \right)^{ck}$.

Write $\mathcal{P} = \{P_1,...,P_p\}$.
For every $i = 1,\dots,p$, let $\U_i$ be the common refinement of the set $P_i$ and the partitions $\mathcal{R}$, $\mathcal{C}$, that is
$\U_i = \{P_i \cap R \cap C : R \in \mathcal{R}, C \in \mathcal{C}\}$.
Set $q = \frac{6p|\mathcal{R}| |\mathcal{C}|}{\delta}$, and note that
$\delta^{-1} \leq q \leq \frac{6p}{\delta} \cdot \left( 3k / \delta^2 \right)^{2ck} \leq
p \cdot \left( 3k / \delta \right)^{5ck}$.
For each $U \in \U_i$, partition $U$ into parts of size $\frac{n}{q}$ and an additional part $Z_{i,U}$ of size less than $\frac{n}{q}$. Let $Z_i$ be the union of all additional parts $Z_{i,U}$, $U \in \U_i$. Note that we have
$\left| Z_i \right| < |\mathcal{R}|\cdot|\mathcal{C}| \cdot \frac{n}{q} \leq
\frac{\delta n}{6p}$.
Partition $Z_i$ arbitrarily into parts of size $\frac{n}{q}$. Denote by $\Q_i$ the resulting equipartition of $P_i$.
Then $\Q := \bigcup_{i=1}^{q}{\Q_i}$ is an equipartition of $V(T)$ which has $q$ parts and refines $\mathcal{P}$.

To finish the proof, we show that $\Q$ is $\delta$-homogeneous. To this end, define $\N$ to be the set of all non-$\delta$-homogeneous pairs $(X,Y) \in \Q \times \Q$ with $X \neq Y$.
Set $Z := \bigcup_{i=1}^{p}{Z_i}$ and note that
$\left| Z \right| = \sum_{i=1}^{p}{|Z_i|} \leq \frac{\delta}{6}n$. By the definition of $\Q$, if
$X \in \Q$ is not contained in $Z$ then there are $R \in \mathcal{R}$ and $C \in \mathcal{C}$ such that $X \subseteq R \cap C$. Thus, a block $X \times Y$ for which $X,Y \not\subseteq Z$ is contained in a block $R \times C$, where $R \in \mathcal{R}$ and $C \in \mathcal{C}$. Let $\N_1$ be the set of pairs $(X,Y) \in \N$ such that either $X$ or $Y$ is contained in $Z$; let $\N_2$ be the set of pairs $(X,Y) \in \N$ such that $X,Y \not\subseteq Z$, and the block $R \times C$ containing $X \times Y$ is not $\frac{\delta^2}{3}$-homogeneous; let $\N_3 = \N \setminus (\N_1 \cup \N_2)$. Since $|Z| \leq \frac{\delta}{6}n$, we have
$\sum_{(X,Y) \in \N_1}{|X||Y|} \leq
2n \cdot \left| Z \right| \leq \frac{\delta}{3}n^2$. Furthermore, as $(\mathcal{R},\mathcal{C})$ is $\frac{\delta^2}{3}$-homogeneous, we have
$\sum_{(X,Y) \in \N_2}{|X||Y|} \leq \frac{\delta^2}{3}n^2$. Thus, in order to prove that $\Q$ is $\delta$-homogeneous, it is enough to show that
$\sum_{(X,Y) \in \N_3}{|X||Y|} \leq \frac{\delta}{3}n^2$.

By the definition of $\N_3$, for every $(X,Y) \in \N_3$ there are $R \in \mathcal{R}$ and
$C \in \mathcal{C}$ such that the block $R \times C$ is $\frac{\delta^2}{3}$-homogeneous and contains the block $X \times Y$.
Let $R \times C$ be a $\frac{\delta^2}{3}$-homogeneous block, and
assume without loss of generality that the dominant value of $R \times C$ is $1$.
Let $X_1,...,X_a$ be the parts of $\Q$ that are contained in $R$, and let $Y_1,...,Y_b$ be the parts of $\Q$ that are contained in $C$.
Define $\gamma(i,j)$ to be the fraction of pairs
$(x,y) \in X_i \times Y_j$ for which $A_{x,y} = 0$. Obviously, if $\gamma(i,j) \leq \delta$ then the block $X_i \times Y_j$ is $\delta$-homogeneous.
We have
\begin{equation*}
\sum_{i=1}^{a}\sum_{j=1}^{b}
{\frac{|X_i||Y_j|}{|R||C|} \gamma(i,j)}
\leq \frac{\delta^2}{3},
\end{equation*}
because the sum on the left hand side is a lower bound for the fraction of pairs
$(x,y) \in R \times C$ for which $A_{x,y} = 0$. By Markov's inequality we have
\begin{equation*}
\sum_{(i,j): \; \gamma(i,j) > \delta}\frac{|X_i||Y_j|}{|R||C|} \leq \frac{\delta}{3} .
\end{equation*}
Thus, the sum of $|X||Y|$ over all pairs $(X,Y) \in \N_3$ for which
$X \times Y \subseteq R \times C$, is at most $\frac{\delta}{3}|R||C|$. By summing over all $\frac{\delta^2}{3}$-homogeneous blocks $R \times C$ of the partition $(\mathcal{R},\mathcal{C})$, we get
$$\sum_{(X,Y) \in \N_3}{|X||Y|} \leq
\sum_{R \in \mathcal{R}, C \in \mathcal{C}}{\frac{\delta}{3}|R||C|} \leq
\frac{\delta}{3}n^2,$$
as required.
\end{proof}

\section{Hard Tournaments}\label{sec:hard}

In this section we prove the second direction of Theorem \ref{thm:main}.
For convenience we restate it as follows.

\begin{theorem}\label{thm:hard}
For every $h$ there are $\varepsilon_0 = \varepsilon_0(h) > 0$ and
$\alpha = \alpha(h) > 0$ with the following property. For every non-$2$-colorable oriented graph $H$ on $h$ vertices, for every positive
$\varepsilon < \varepsilon_0$ and for every $n \geq n_0(\varepsilon)$ there is a tournament $T$ on $n$ vertices which is $\varepsilon$-far from being $H$-free but contains at most
$\varepsilon^{\alpha \log(1/\varepsilon)} n^h$ copies of $H$.
\end{theorem}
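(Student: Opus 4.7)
The plan is to prove Theorem~\ref{thm:hard} by a Ruzsa--Szemer\'edi-type construction driven by a Behrend-style set of integers, with the ``short'' structures inside it being the multi-partite tournaments produced by Lemma~\ref{lem:force}. Since $H$ is non-$2$-colorable we have $k:=\chi(H)\geq 3$; fix a proper $k$-coloring $V(H)=H_1\cup\dots\cup H_k$. Apply Lemma~\ref{lem:force} (with $D$ chosen to make the construction work, e.g.\ a suitable tournament on $[k]$) to obtain a $k$-partite tournament $F=(V_1\cup\dots\cup V_k,E(F))$ with parts of size $m_0=m_0(h)$ such that every completion of $F$ contains $\gamma m_0^2$ copies of $H$ that are edge-disjoint on $E(F)$.

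Next, take a Behrend-type set $A\subseteq[M]$ of size $|A|\geq M\cdot 2^{-O(\sqrt{\log M})}$ with no non-trivial solution to an equation such as $x_1+\dots+x_{k-1}=(k-1)x_k$, and choose $M=2^{\Theta((\log(1/\varepsilon))^2)}$ so that $|A|/M \approx \varepsilon^{1+o(1)}$. Partition $V(T)$ into $k$ groups $U_1,\dots,U_k$ of size $n/k$, each viewed as $[M]\times[n/(kM)]$. For every base coordinate $b$ and every $a\in A$ define a configuration $\mathcal{C}_{b,a}$ consisting of $m_0$-sets $W_i^{b,a}\subseteq U_i$ determined by $(b,a)$ in the standard RS manner; on each configuration install the $k$-partite tournament $F$ from Lemma~\ref{lem:force}. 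All remaining edges of $T$ are oriented by a fixed ``default'' rule (transitively inside each $U_i$ along a chosen linear order, and following the $D$-orientation between the $U_i$'s). By the no-nontrivial-solution property of $A$ the configurations are edge-disjoint on $E(F)$, so each of them guarantees $\gamma m_0^2$ copies of $H$ that cannot share those edges; in total $T$ contains $\Omega(|A|\cdot (n/M)^{k-1})=\Omega(\varepsilon n^2)$ pairwise edge-disjoint copies of $H$, which forces $\Omega(\varepsilon n^2)$ reversals to reach $H$-freeness.

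The main obstacle --- and what forces the ``additional twists'' promised in the introduction --- is the upper bound on the total number of copies of $H$ in $T$. Since every pair of vertices of $T$ carries an edge, many spurious copies could in principle arise from mixing configuration edges with default edges, in contrast to the standard undirected Ruzsa--Szemer\'edi setup. To control these I would introduce a notion of \emph{ordered homomorphism} from $H$ into $T$ that incorporates the linear orders used in the transitive completions of the $U_i$'s; the non-$2$-colorability of $H$ is then used to show that any such homomorphism either comes from a single configuration $\mathcal{C}_{b,a}$ or forces a non-trivial solution of the forbidden additive equation in $A$, which does not exist. Summing over all ordered homomorphisms --- of which the valid ones number $O(|A|\cdot n^{h-k+1})$ --- this yields at most $\varepsilon^{\Omega(\log(1/\varepsilon))}\, n^h$ copies of $H$ in $T$, as required.
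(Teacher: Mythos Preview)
Your overall architecture --- Behrend-type base, Lemma~\ref{lem:force} to populate the cliques, edge-disjointness for the far-ness lower bound --- matches the paper. The gap is entirely in the last paragraph, where you assert that non-$2$-colorability forces every embedding of $H$ into $T$ to yield a non-trivial solution of \emph{the} forbidden equation. This is the heart of the proof and, as written, it does not go through.

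The difficulty is that a copy of $H$ in $T$ need not respect the $k$-coloring $H_1,\dots,H_k$ you fixed: its vertices may be distributed among $U_1,\dots,U_k$ according to some \emph{other} proper coloring $H'_1,\dots,H'_k$ (and that coloring varies with the embedding). Each embedding therefore produces its own backedge graph, and the odd cycle guaranteed by non-$2$-colorability may live on a different tuple of parts $(i_1,\dots,i_\ell)$ for different embeddings. A single Behrend equation such as $x_1+\dots+x_{k-1}=(k-1)x_k$ forbids only one cycle pattern; it says nothing about the others. So the dichotomy ``single configuration or forbidden solution'' is not established.

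The paper resolves this with machinery you have not supplied: for each labeling of $H$ it takes the \emph{ordered core} of the backedge graph, and then selects a $\homleq$-maximal element $K(H)$ among all of these. Maximality guarantees (Proposition~\ref{prop:core}, Corollary~\ref{cor:odd_cycle}) that \emph{every} embedding of $H$ into $T$, regardless of how its vertices fall into the parts, yields an order-preserving homomorphism onto $K(H)$ and hence contains a copy of one \emph{fixed} cycle $a_{i_1}\dots a_{i_\ell}a_{i_1}$ of $K(H)$. Only then can the Ruzsa--Szemer\'edi graph be tailored (Theorem~\ref{thm:RS}, Item~4) to make that specific cycle rare. Note also that the relevant $k$ is $|V(K(H))|$, not $\chi(H)$; and the digraph $D$ fed into Lemma~\ref{lem:force} is determined by the non-edges of $K(H)$, not an arbitrary tournament on $[k]$ (a tournament $D$ would require all edges between $H_i$ and $H_j$ to point one way, which a general $k$-coloring need not satisfy).

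A smaller point: ``$\Omega(\varepsilon n^2)$ edge-disjoint copies'' does not immediately give $\varepsilon$-far in the tournament model, since reversing edges \emph{inside} the transitive parts $U_i$ can destroy many copies simultaneously. The paper handles this with an auxiliary tournament $T''$ that keeps the cut-edges of $T$ but adopts the cluster-edges of the modified $T'$; Lemma~\ref{lem:force} is then reapplied to $T''$. You should make this step explicit.
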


In this Section we prove Theorem \ref{thm:hard}.
Throughout this section, the vertex sets of all graphs and digraphs are assumed to be subsets of $\mathbb{N}$. The reason for this assumption is that sometimes we want to have a linear ordering of the vertices.
Before getting to the actual proof of Theorem \ref{thm:hard}, we first study some properties of homomorphisms between graphs which
take into account an order of their vertex sets.

\subsection{Order-preserving Homomorphisms}
\begin{definition}\label{def:ord_hom}(Order-Preserving Homomorphism)
Let $G,G'$ be (undirected) graphs. An {\em order-preserving homomorphism} from $G$ to $G'$ is a function $f : V(G) \rightarrow V(G')$ satisfying the following two conditions.
\begin{enumerate}
\item $f$ is order preserving: for every $i,j \in V(G)$, if
$i \leq j$ then $f(i) \leq f(j)$.
\item $f$ is a graph homomorphism: for every $\{i,j\} \in E(G)$ we have $\{f(i),f(j)\} \in E(G')$.
\end{enumerate}
\end{definition}

We write $G' \homleq G$ if there is an order-preserving homomorphism from $G$ to $G'$. Notice that the relation $\homleq$ is transitive (the composition of order-preserving homomorphisms is also an order-preserving homomorphism). An {\em  order-preserving isomorphism} is an order-preserving homomorphism which is a graph isomorphism.
We write $G \cong G'$ if there is an order-preserving isomorphism between $G$ and $G'$. \footnote{Notice that two isomorphic labeled graphs may not have an order-preserving isomorphism between them. Moreover, if two graphs have an order-preserving isomorphism between them then it is unique, assuming that the vertices in each graph have different labels, which we always do in our setting.}

A subgraph of any graph $G$ is always assumed to inherit the same vertex-labeling as it had in $G$. The {\em ordered core} of $G$ is a smallest (with respect to number of vertices) subgraph of $G$ to which there is an order-preserving homomorphism from $G$. The ordered core of $G$ is assumed to inherit the same vertex-labeling as it had in $G$.
Notice that by definition, there is no order-preserving homomorphism from the ordered core of $G$ to a proper induced subgraph of it. We say that a graph is an {\em ordered core} if it is the ordered core of itself.

\begin{proposition}\label{prop:poset}
Let $G_1,G_2$ be a pair of ordered cores. If
$G_2 \homleq G_1$ and
$G_1 \homleq G_2$ then
$G_1 \cong G_2$.
\end{proposition}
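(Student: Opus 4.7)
The plan is a standard ``core'' argument adapted to the ordered setting. Let $f : G_1 \to G_2$ be an order-preserving homomorphism realizing $G_2 \homleq G_1$, and let $g : G_2 \to G_1$ realize $G_1 \homleq G_2$. First I would observe that the composition $h := g \circ f$ is an order-preserving graph homomorphism from $G_1$ into the induced subgraph $G_1[h(V(G_1))]$, since both conditions in Definition \ref{def:ord_hom} are preserved under composition.

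Next I would invoke the ordered-core hypothesis on $G_1$. Because $G_1$ is its own ordered core, every subgraph of $G_1$ to which $G_1$ admits an order-preserving homomorphism has at least $|V(G_1)|$ vertices; applied to $h$ this forces $h(V(G_1)) = V(G_1)$, so $h$ is a surjective self-map of the finite set $V(G_1)$ and hence a bijection. An order-preserving bijection of a finite totally ordered set to itself must be the identity, so $g \circ f = \operatorname{id}_{V(G_1)}$. The completely symmetric argument, using that $G_2$ is also an ordered core, yields $f \circ g = \operatorname{id}_{V(G_2)}$.

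Finally I would conclude that $f$ is an order-preserving isomorphism. The two identities above make $f$ a bijection with inverse $g$. To verify that non-edges go to non-edges, suppose $\{i,j\} \notin E(G_1)$ but $\{f(i),f(j)\} \in E(G_2)$; applying $g$, which is a graph homomorphism, gives $\{g(f(i)), g(f(j))\} = \{i,j\} \in E(G_1)$, a contradiction. Hence $f$ is a graph isomorphism, and since $f$ is also order-preserving it witnesses $G_1 \cong G_2$.

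The only delicate step is the second one: translating the ordered-core property into the statement that the composition $g \circ f$ is onto. Once that is in place, the rest is formal, and what distinguishes the ordered setting from the usual notion of a core is the automatic upgrade from ``bijective endomorphism of a finite totally ordered set'' to ``identity,'' which lets us recover an honest isomorphism rather than only an automorphism that fixes $G_1$ up to relabeling.
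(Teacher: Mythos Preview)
Your proof is correct and follows essentially the same route as the paper's: compose $g\circ f$, use the ordered-core property of $G_1$ to force this self-map to be onto (hence the identity on a finite totally ordered set), do the same for $f\circ g$, and conclude that $f$ is an order-preserving graph isomorphism. Your write-up is simply more explicit than the paper's in spelling out why an order-preserving bijection of a finite chain is the identity and why non-edges are preserved.
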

\begin{proof}
By assumption there exist order-preserving homomorphisms
$f : G_1 \rightarrow G_2$ and
$g : G_2 \rightarrow G_1$.
Then $g \circ f$ is an order-preserving homomorphism from $G_1$ to itself. Since $G_1$ is a core, $g$ must be surjective. The same argument shows that $f$ is surjective. So $f,g$ are bijections and since $f,g$ are order-preserving we have $g = f^{-1}$. Therefore $f,g$ are order-preserving graph isomorphisms, as required.
\end{proof}

\noindent
Proposition \ref{prop:poset} shows that the ordered core of a graph is unique up to order-preserving isomorphism.

\begin{proposition}\label{prop:minimal}
Let $G_1,G_2$ be a pair of ordered cores and suppose that $G_1 \cong G_2$. Then every order-preserving homomorphism $f : G_1 \rightarrow G_2$ is an order-preserving isomorphism.
\end{proposition}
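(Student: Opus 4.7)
The plan is to reduce this to the core-minimality argument already used in the proof of Proposition~\ref{prop:poset}. By assumption there is an order-preserving isomorphism $\phi : G_1 \to G_2$; since $\phi$ is a bijective order-preserving graph isomorphism, its inverse $\phi^{-1} : G_2 \to G_1$ is also an order-preserving isomorphism. Given the order-preserving homomorphism $f : G_1 \to G_2$, the composition $g := \phi^{-1} \circ f$ is then an order-preserving homomorphism from $G_1$ to itself, so the problem reduces to analyzing order-preserving self-homomorphisms of an ordered core.

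The first main step is to show that $g$ must be surjective. If not, let $S := g(V(G_1)) \subsetneq V(G_1)$. Then $g$ factors through the induced subgraph $G_1[S]$: viewed as a map $V(G_1) \to S$, it is still order-preserving, and it still sends edges of $G_1$ to edges of $G_1$ that lie inside $S$, hence to edges of $G_1[S]$. This yields an order-preserving homomorphism from $G_1$ to a proper subgraph of $G_1$ on fewer vertices, contradicting the fact that $G_1$ is an ordered core. This is essentially the same trick used in Proposition~\ref{prop:poset}, just applied to a self-map.

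The second step is to upgrade surjectivity to the conclusion that $g$ is the identity on $V(G_1)$. Since $V(G_1)$ is finite and $g$ is surjective, $g$ is a bijection. Any bijection on a totally ordered finite set that is order-preserving (in the sense $i \leq j \Rightarrow g(i) \leq g(j)$) is automatically strictly order-preserving (injectivity upgrades $\leq$ to $<$), and hence must be the identity permutation. Therefore $\phi^{-1} \circ f = \mathrm{id}_{V(G_1)}$, which gives $f = \phi$, and so $f$ is an order-preserving isomorphism, as required.

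The only delicate point is the very first step: making sure that when $g$ fails to be surjective, the restriction $V(G_1) \to g(V(G_1))$ really is a graph homomorphism into the \emph{induced} subgraph $G_1[g(V(G_1))]$. This uses the fact that edges of $G_1$ are mapped to edges of $G_1$ by $g$, together with the fact that the induced subgraph retains all edges with both endpoints in $g(V(G_1))$. Once this is clear, the rest is bookkeeping and the argument is essentially forced by the definition of an ordered core.
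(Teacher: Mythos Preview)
Your proof is correct and follows essentially the same approach as the paper: compose $f$ with the given order-preserving isomorphism to obtain an order-preserving self-homomorphism of an ordered core, use the core property to force surjectivity (hence bijectivity), and then use that an order-preserving bijection of a finite linearly ordered set is the identity. The only cosmetic difference is that the paper composes on the other side (considering $f \circ g : G_2 \to G_2$ with $g : G_2 \to G_1$ the isomorphism) and is terser about the surjectivity step, which you spell out in more detail.
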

\begin{proof}
By definition, there is an order-preserving isomorphism
$g : G_2 \rightarrow G_1$. Then $f \circ g$ is an order-preserving homomorphism from $G_2$ to itself. By the definition of an ordered core,
$f \circ g$ is a bijection. Since $f,g$ are order-preserving we have that $f \circ g$ is the identity map and hence
$f = g^{-1}$. So $f$ is an isomorphism, as required.
\end{proof}

Let $H$ be an oriented graph. We say that an edge $(i,j) \in E(H)$ is a
{\em forward-edge} if
$i < j$ and {\em backward-edge} (or {\em backedge}) otherwise.
The
{\em backedge graph} of $H$ is the (undirected) graph on $V(H)$ in which $\{i,j\}$ is an edge if and only if $i < j$ and $j \rightarrow i$. Note that the backedge graph depends on the labeling of the vertices of $H$. If we relabel the vertices of $H$ then we may get a different backedge graph. We will need the following characterization of oriented graphs with chromatic number at most $k$. Although this characterization will be crucial in our proof of Theorem \ref{thm:hard}, it is computationally inefficient even for $k = 2$, as shown in Section 5.
\begin{proposition}\label{prop:chromatic}
An oriented graph $H$ is $k$-colorable if and only if there is a labeling of the vertices of $H$ for which the corresponding backedge graph is $k$-colorable (as a graph).
\end{proposition}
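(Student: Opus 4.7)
The plan is to translate between the two notions using a single basic fact: a finite digraph is acyclic if and only if its vertices admit a linear ordering in which every edge points forward (a topological order). Both directions of the equivalence drop out of this observation.

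For the ``only if'' direction, assume $H$ has a proper $k$-coloring $V(H) = V_1 \cup \dots \cup V_k$, so each $H[V_i]$ is acyclic. For each $i$, pick a topological ordering of $H[V_i]$. Now relabel $V(H)$ by placing all of $V_1$ first (in its topological order), then all of $V_2$, and so on. Under this labeling, every edge inside a class $V_i$ is a forward-edge (by choice of the topological order within $V_i$), so the backedge graph has no edge inside any $V_i$. Hence $V_1,\dots,V_k$ is a proper $k$-coloring of the backedge graph.

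For the ``if'' direction, suppose we have a labeling of $V(H)$ whose backedge graph $B$ is properly $k$-colorable, with color classes $U_1,\dots,U_k$. Since each $U_i$ is independent in $B$, no edge of $H$ with both endpoints in $U_i$ is a backedge; that is, every edge of $H[U_i]$ points forward with respect to the given labeling. A digraph whose edges are all forward with respect to some linear order of its vertices cannot contain a directed cycle, so $H[U_i]$ is acyclic for every $i$. Therefore $U_1,\dots,U_k$ is a proper $k$-coloring of the oriented graph $H$.

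There is no real obstacle here: both implications are immediate once one observes the correspondence ``no backedge inside a class $\Longleftrightarrow$ the induced subdigraph is compatible with the given linear order $\Longleftrightarrow$ the induced subdigraph is acyclic (using that the labeling itself provides a topological order).'' The only mild subtlety is that in the first direction we must actually construct the labeling (by concatenating topological orders), rather than hoping an arbitrary labeling will work.
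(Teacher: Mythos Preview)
Your proof is correct and essentially identical to the paper's argument: both directions hinge on the observation that a color class is independent in the backedge graph precisely when the induced subdigraph has only forward edges under the given labeling, which is equivalent to acyclicity. Your construction in the ``only if'' direction (concatenating topological orders of the $V_i$) is slightly more explicit than the paper's, which simply says to label each $U_i$ so that it has no internal backedges, but this is a cosmetic difference.
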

\begin{proof}
Assume first that there is a labeling of $V(H)$ such that the corresponding backedge graph, $G$, has a proper (graph) $k$-coloring
$U_1 \cup \dots \cup U_k$. Then for every $i$, the set $U_i$ is acyclic (in $H$) because all the edges inside it are forward-edges.

Now assume that $H$ has a proper (acyclic) $k$-coloring
$U_1 \cup \dots \cup U_k$. For every $i = 1,...,k$, label the vertices of $U_i$ such that there are no backedges inside $U_i$ (this is clearly possible because $U_i$ is acyclic). Then $U_i$ is an independent set in the backedge graph corresponding to this labeling.
\end{proof}

For an oriented graph $H$ we define a family of graphs $\C = \C(H)$, all labeled with $[h]$, as follows. We go through all $h!$ vertex-labelings of $H$ using the labels $1,...,h$ (where $h = v(H)$), and for each labeling we take the ordered core of the corresponding backedge graph. Let $\mathcal{C}$ be the set
of all these ordered cores.
Proposition \ref{prop:poset} implies that $(\C,\homleq)$ is a poset in the following sense:
for every $C_1,C_2 \in \C$, if $C_2 \homleq C_1$ and $C_1 \homleq C_2$ then $C_1 \cong C_2$. In other words, $\homleq$ is a partial order on the set of equivalence classes of $\C$ under the equivalence relation $\cong$.
Finally, let $K(H)$ be a maximal element of the poset $(\C,\homleq)$, i.e. $K(H)$ is an (arbitrary) element of a maximal equivalence class. The maximality of $K(H)$ implies that for every $C \in \C$, if there is an order-preserving homomorphism from $C$ to $K(H)$ (namely if $K(H) \homleq C$) then $C \cong K(H)$.
\begin{proposition}\label{prop:core}
Let $H$ be an oriented graph. Consider any vertex-labeling of $H$ and let $G$ be the corresponding backedge graph. For every order-preserving homomorphism
$f : \nolinebreak G \rightarrow \nolinebreak K(H)$ there is a set
$X \subseteq V(H)$ such that $f|_{X}$ is a (graph) isomorphism onto $K$.
\end{proposition}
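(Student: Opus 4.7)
The plan is to peel off the ordered core of $G$ and then chain together Propositions \ref{prop:poset} and \ref{prop:minimal} together with the maximality of $K(H)$ in the poset $(\C, \homleq)$. Let $C$ be the ordered core of $G$, viewed as an induced subgraph of $G$ inheriting the vertex-labeling. Since $C$ arises as the ordered core of a backedge graph of $H$ (for the fixed labeling we started with), we have $C \in \C$.

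Next I would restrict $f$ to $V(C)$. The restriction $f|_{V(C)}$ is automatically order-preserving (as $f$ is), and every edge of $C$ is an edge of $G$, so $f|_{V(C)}$ sends edges of $C$ to edges of $K(H)$. Hence $f|_{V(C)}$ is an order-preserving homomorphism from $C$ to $K(H)$, witnessing $K(H) \homleq C$. The maximality of $K(H)$ in the poset $(\C, \homleq)$ now forces $C \cong K(H)$.

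Having established that $C$ and $K(H)$ are two ordered cores which are order-preservingly isomorphic, Proposition \ref{prop:minimal} applies to the homomorphism $f|_{V(C)} : C \to K(H)$ and tells us that this restriction is itself an order-preserving graph isomorphism. Taking $X = V(C)$ therefore yields the desired set: $f|_X$ is a graph isomorphism from the induced subgraph on $X$ onto $K(H)$.

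There is no serious obstacle in this argument; everything is a formal consequence of the earlier propositions. The only thing to be careful about is verifying that the restriction of $f$ to the vertex set of the ordered core is still a homomorphism (it is, because $C$ is an induced subgraph and edges of $C$ are edges of $G$), and noting that $C \in \C$ so that the maximality of $K(H)$ can be invoked.
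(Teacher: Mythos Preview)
Your proposal is correct and matches the paper's proof essentially step for step: take the ordered core $C$ of $G$, restrict $f$ to $V(C)$ to get an order-preserving homomorphism $C \to K(H)$, invoke maximality of $K(H)$ to conclude $C \cong K(H)$, and then apply Proposition~\ref{prop:minimal} to see that $f|_{V(C)}$ is an isomorphism, so $X = V(C)$ works. The paper's version is just terser, omitting the justifications you spell out (that $C \in \C$ and that $f|_{V(C)}$ is indeed a homomorphism).
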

\begin{proof}
Let $C$ be the ordered core of $G$. Then $f|_{V(C)}$ is an order-preserving homomorphism from $C$ to $K(H)$. By the maximality of $K(H)$ we have $C \cong K(H)$. Then $f|_{V(C)}$ is an order-preserving isomorphism by Proposition \ref{prop:minimal}, implying the assertion with $X = V(C)$.
\end{proof}
\begin{corollary}\label{cor:odd_cycle}
Let $H$ be a non-$2$-colorable oriented graph. Then the graph $K(H)$ contains a cycle $c_1c_2 \dots c_{\ell}c_1$ of length $\ell \geq 3$ with the following property. Consider any vertex-labeling of $H$ and let $G$ be the corresponding backedge graph. Then for every order-preserving homomorphism
$f : \nolinebreak G \rightarrow \nolinebreak K(H)$ there are vertices
$u_1 \in f^{-1}(c_1),\dots,u_{\ell} \in f^{-1}(c_{\ell})$ such that $u_1u_2\dots u_{\ell}u_1$ is a cycle in $G$.
\end{corollary}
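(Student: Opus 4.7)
The plan is to combine Proposition~\ref{prop:chromatic} and Proposition~\ref{prop:core} to produce a cycle in $K(H)$ that can always be lifted. The strategy is to first establish that $K(H)$ itself contains an odd cycle, then to use the isomorphism property from Proposition~\ref{prop:core} to transfer any such cycle back to $G$ through the preimage.

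First I would observe that $K(H)$ has chromatic number at least $3$. Since $K(H) \in \C(H)$ by construction, there is some labeling of $V(H)$ whose backedge graph $G_0$ has $K(H)$ as its ordered core; in particular there is an order-preserving homomorphism from $G_0$ to $K(H)$. Graph homomorphisms do not decrease chromatic number in the target, so $\chi(K(H)) \geq \chi(G_0)$. Because $H$ is not $2$-colorable, Proposition~\ref{prop:chromatic} yields $\chi(G_0) \geq 3$, hence $\chi(K(H)) \geq 3$. Consequently $K(H)$ contains an odd cycle, and in particular a cycle $c_1 c_2 \dots c_\ell c_1$ with $\ell \geq 3$. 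I would fix such a cycle once and for all.

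Next, I would show that this fixed cycle has the stated lifting property. Consider an arbitrary labeling of $V(H)$, let $G$ be the associated backedge graph, and let $f : G \to K(H)$ be any order-preserving homomorphism. Proposition~\ref{prop:core} provides a subset $X \subseteq V(H)$ such that $f|_X$ is a graph isomorphism onto $K(H)$. Setting $u_i := (f|_X)^{-1}(c_i)$ for each $i = 1,\dots,\ell$, the vertices $u_1,\dots,u_\ell$ are distinct since $c_1,\dots,c_\ell$ are distinct and $f|_X$ is a bijection, and $u_i \in f^{-1}(c_i)$. Moreover, because $\{c_i,c_{i+1}\}$ (indices mod $\ell$) is an edge of $K(H)$ and $f|_X$ is a graph isomorphism, $\{u_i,u_{i+1}\}$ is an edge of $G[X]$, and hence of $G$. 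Therefore $u_1 u_2 \dots u_\ell u_1$ is a cycle in $G$, as desired.

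The main conceptual point — and the only place where real work is required — is the assertion $\chi(K(H)) \geq 3$; once one recognizes that $K(H)$ is itself an ordered core in $\C(H)$ and uses the standard fact that homomorphisms do not decrease chromatic number in the target, the rest of the argument is essentially a formal transfer through the isomorphism guaranteed by Proposition~\ref{prop:core}. No routine calculations arise; the proof is a short structural deduction from the preceding propositions.
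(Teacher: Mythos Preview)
Your proof is correct and follows essentially the same approach as the paper: both deduce that $K(H)$ contains an odd cycle from the non-$2$-colorability of some backedge graph $G_0$ via the homomorphism $G_0 \to K(H)$ (you phrase this through chromatic numbers, the paper through the homomorphic image of an odd cycle), and both obtain the lifting property directly from Proposition~\ref{prop:core}. Your write-up is in fact more explicit about the second step than the paper's one-line reference.
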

\begin{proof}
By the definition of $K(H)$ there is a vertex-labeling of $H$ such that $K(H)$ is the ordered core of the corresponding backedge graph, $G_0$. By Proposition \ref{prop:chromatic}, $G_0$ is not $2$-colorable and therefore contains an odd cycle. It is easy to see that the homomorphic image of an odd cycle must contain an odd cycle. Therefore $K(H)$ contains an odd cycle, whose length is obviously at least $3$.
The other assertion of the corollary follows directly from Proposition \ref{prop:core}.
\end{proof}
\subsection{Proof of Theorem \ref{thm:hard}}

\noindent
The main ingredient in the proof of Theorem \ref{thm:hard} is the following construction (see \cite{RS} and \cite{subgraphs}).
\begin{theorem}\label{thm:RS}
For every $k \geq 3$ there are $\delta_0 = \delta_0(k)$ and $c = c(k)$ such that for every
$\delta < \delta_0$, for every $3 \leq \ell \leq k$ and for every sequence of distinct indices
$1 \leq i_1,i_2,\dots,i_{\ell} \leq k$,
there is a graph
$R = R(k,\delta; i_1,\dots,i_{\ell})$ with the following properties:
\begin{enumerate}
\item $V(R) = X_1 \uplus ... \uplus X_{k}$ and $X_i$ is an independent set for every $i$.
\item $|V(R)| \geq \left( \frac{1}{\delta} \right)^{c\log(1/\delta)}$.
\item $E(R)$ is the union of at least $\delta |V(R)|^2$ pairwise edge-disjoint $k$-cliques, each of the form $\{x_1,\dots,x_k\}$ with $x_i \in X_i$.
\item $R$ contains at most $|V(R)|^2$ cycles $x_{i_1}x_{i_2}...x_{i_{\ell}}x_{i_1}$ with $x_{i_j} \in X_{i_j}$ for $j = 1,\dots,\ell$.
\end{enumerate}
\end{theorem}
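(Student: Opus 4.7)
The plan is to build $R$ via a Behrend-type construction in the spirit of \cite{RS,subgraphs}, tailored to the cyclic constraint in Property~4. First I fix the coefficient vector $c_j := i_{j+1} - i_j$ with cyclic indices ($i_{\ell+1}=i_1$); these satisfy $\sum_j c_j = 0$ (telescoping) and each $c_j \neq 0$ (the $i_j$'s are distinct). The key input will be a set $S \subseteq [N]$ of size $|S| \geq N\exp(-C\sqrt{\log N})$ (with $C$ depending only on the $c_j$, hence only on $k$) whose only solutions to the translation-invariant equation $\sum_j c_j x_j = 0$ in $S^\ell$ are the diagonal ones $x_1 = \cdots = x_\ell$. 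Such an $S$ is produced, for any fixed translation-invariant equation, by the standard Behrend sphere construction together with a base-$b$ encoding that prevents carries; I will invoke this as a black box.

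With $S$ in hand, pick $M = \Theta(N)$ and take $X_1,\ldots,X_k$ to be disjoint copies of $[M]$. For each pair $(a,s) \in [M] \times S$ form a base $k$-clique whose vertex in $X_i$ is the element $a + i s$, and let $R$ be the union of all these cliques. Properties~1 and~3 are then essentially routine: the parts $X_i$ are independent by construction, and the base cliques are pairwise edge-disjoint since the pair $(a,s)$ is recovered from any edge $\{a+is,\,a+js\}$ via $s = (v-u)/(j-i)$ and $a = u - i s$. Setting $\delta := \Theta\bigl(|S|/(k^2 M)\bigr)$ makes the $M|S|$ cliques contribute at least $\delta |V(R)|^2$ edge-disjoint $K_k$'s; the resulting calibration $\log(1/\delta) = \Theta(\sqrt{\log N})$ gives $|V(R)| \asymp kN = (1/\delta)^{\Theta(\log(1/\delta))}$, yielding Property~2.

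The heart of the argument is Property~4. Given a cycle $x_{i_1}\cdots x_{i_\ell} x_{i_1}$ with $x_{i_j}\in X_{i_j}$, let $K(a_j,s_j)$ be the unique base clique containing the edge $\{x_{i_j}, x_{i_{j+1}}\}$. The vertex $x_{i_j}$ sits in both $K(a_{j-1},s_{j-1})$ and $K(a_j,s_j)$ as the clique-vertex of ``index'' $i_j$, so
\[
a_{j-1} + i_j s_{j-1} \;=\; a_j + i_j s_j, \qquad \text{i.e.,}\quad a_j - a_{j-1} \;=\; i_j(s_{j-1}-s_j).
\]
Summing cyclically over $j$ telescopes the $a_j$'s on the left and yields
\[
\sum_{j=1}^{\ell} (i_{j+1}-i_j)\, s_j \;=\; 0,
\]
i.e., $(s_1,\ldots,s_\ell)\in S^\ell$ is a solution of our target equation. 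By the defining property of $S$ the only possibility is $s_1=\cdots=s_\ell$, which then forces every $a_j - a_{j-1}=0$, so the whole cycle lies inside a single base clique. Since each base clique contains exactly one cycle of the prescribed label pattern, the total count is at most $M|S| \leq M^2 \leq |V(R)|^2$.

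The main obstacle is the first step: constructing $S$ with density $\exp(-O(\sqrt{\log N}))$ that avoids non-trivial solutions of a prescribed translation-invariant equation. For $3$-APs this is Behrend's original strict-convexity-of-the-sphere argument; for general coefficient vectors $(c_j)$ with $\sum c_j = 0$ one needs the analogous (and standard, cf.\ \cite{RS,subgraphs}) fact that distinct points on a Euclidean sphere cannot satisfy such a relation non-trivially. The rest of the proof is bookkeeping, but one must choose the parameters $(M,N,b,d)$ coherently so that the Behrend density bound, the definition of $\delta$, and the hypothesis $\delta < \delta_0(k)$ are all consistent, and so that $c$ can be taken to depend only on $k$ (taking the worst case over the finitely many tuples $(i_1,\ldots,i_\ell)$).
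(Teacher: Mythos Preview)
The paper omits its own proof of this theorem, so I can only evaluate your argument on its merits; the overall strategy (Behrend set plus arithmetic-progression cliques) is indeed the standard one hinted at in \cite{subgraphs}, and for $\ell=3$ your derivation is correct. But there is a genuine gap once $\ell\geq 4$.

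The problematic step is the claim that a Behrend-type set $S$ has only diagonal solutions to $\sum_j c_j s_j = 0$ for an arbitrary translation-invariant coefficient vector $(c_j)$. This is false. Take $\ell=4$ and indices $(i_1,i_2,i_3,i_4)=(1,2,4,3)$, so that $c=(1,2,-1,-2)$. The equation $s_1+2s_2-s_3-2s_4=0$ is satisfied by $(a,b,a,b)$ for \emph{every} $a,b\in S$, and these are non-diagonal whenever $a\neq b$. No choice of $S$ with $|S|\geq 2$ can avoid them. Tracing back through your parametrisation, each such solution together with a choice of $x_{i_1}\in[M]$ produces a genuine $4$-cycle in $R$ of the prescribed type, so $R$ contains on the order of $M|S|^2$ such cycles. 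With $|S|\asymp M/\exp(C\sqrt{\log M})$ this is far larger than $|V(R)|^2=(kM)^2$, so Property~4 fails. The same phenomenon occurs for odd $\ell$ as well: with $\ell=5$ and $(i_1,\dots,i_5)=(1,3,2,4,5)$ one gets $c=(2,-1,2,1,-4)$, and $(a,b,a,b,a)$ is a non-diagonal solution for any $a,b\in S$. The underlying issue is that your $c_j=i_{j+1}-i_j$ can produce an equation of genus $\geq 2$ (a proper subset of the $c_j$'s sums to zero), and for such equations solution-free sets are provably much sparser than Behrend density.

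The strict-convexity heuristic you invoke is exactly what breaks: for $\ell\geq 4$ the equation only says that two different convex combinations of sphere points coincide, which does not force the points to be equal. One way to repair the construction is to replace the weights $i$ in ``vertex in $X_i$ is $a+is$'' by generic weights $\lambda_i$ (e.g.\ $\lambda_i=B^i$ for large $B$); a short argument then shows that $\sum_{j\in P}(\lambda_{i_{j+1}}-\lambda_{i_j})\neq 0$ for every proper nonempty $P\subsetneq[\ell]$, so the resulting equation has genus~$1$. Even then, you must justify that genus-$1$ equations admit Behrend-density solution-free sets, which requires more than the bare sphere argument; this is where the real work lies.
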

The proof of Theorem \ref{thm:RS} uses (simple variants of) Behrend's construction of a large set of integers without a $3$-term arithmetic progression (see \cite{Behrend}). As it is similar to related constructions proved in previous papers (see, e.g., \cite{subgraphs}) it is omitted.

\noindent
We are now ready to prove Theorem \ref{thm:hard}.
\begin{proof}[Proof of Theorem \ref{thm:hard}]
Let $H$ be a non-$2$-colorable oriented graph on $h$ vertices.
Consider the graph $K = K(H)$ defined in Subsection 4.1.
Put $k = v(K)$ and write
$V(K) = \{a_1,...,a_k\}$, where
$1 \leq a_1 < \nolinebreak a_2 < \nolinebreak \dots < a_k \leq h$.\footnote{Recall that $K$ inherits its vertex-labeling from the backedge graph of $H$ whose ordered core is $K$ and whose vertex-labels are $1,\dots,h$.}
By Corollary \ref{cor:odd_cycle}, $K$ contains a cycle
$(a_{i_1}a_{i_2}...a_{i_{\ell}}a_{i_1})$ of length $\ell \geq 3$.
Let $m_0 = m_0(h)$ and $\gamma = \gamma(h)$ be from Lemma \ref{lem:force}. Set $\varepsilon_0 = \varepsilon_0(h)$ to be small enough so that every $\varepsilon < \varepsilon_0$ will satisfy the inequalities
\begin{equation}\label{eq:eps_0_hard}
(\varepsilon \gamma^{-1})^{\log(\gamma/\varepsilon)} \leq \varepsilon^{0.5\log(1/\varepsilon)},
\; \; \;
\varepsilon < \gamma \delta_0(k),
\end{equation}
where $\delta_0(k)$ is from Theorem \ref{thm:RS}. Let
$\varepsilon < \varepsilon_0$. Let $R = R(k,\delta; i_1,\dots,i_{\ell})$ be the graph obtained by applying Theorem \ref{thm:RS} with parameters
$k$ and
\begin{equation*}
\delta = \varepsilon \gamma^{-1},
\end{equation*}
and with $i_1,\dots,i_{\ell}$ being the indices of the cycle in $K$ as above. Our choice of $\varepsilon_0$ guarantees that we can apply Theorem \ref{thm:RS} with the above $\delta$. Put
$r = |V(R)|$
and let $n$ be an integer which we assume, for simplicity of presentation, to be divisible by $r$. We will also assume that $n$ is large enough where needed.



By the definition of $K$, there is a vertex-labeling of $H$ such that $K$ is the ordered core of the corresponding backedge graph, $G_0$. Hence there is an order-preserving homomorphism $g : G_0 \rightarrow K$.
Denote $H_i = g^{-1}(a_i)$ for $i = 1,...,k$. We claim that $H_1,\dots,H_k$ have the following two properties.
\begin{enumerate}[(a)]
\item $H_i$ is acyclic for every $i = 1,...,k$.
\item For every $1 \leq i < j \leq k$, if $\{a_i,a_j\} \notin E(K)$ then
$H_i \rightarrow H_j$.
\end{enumerate}
Property (a) follows from the definition of a backedge graph and the fact that $g$ is a graph homomorphism. For property (b) we also need to use the fact that $g$ is order-preserving.
Define an oriented graph $D$ on $[k]$ as follows. For every $1 \leq i < j \leq k$, if $\{a_i,a_j\} \notin E(K)$ then $(i,j) \in D$ (that is, there is a directed edge from $i$ to $j$) and otherwise $(i,j),(j,i) \notin E(D)$. Then for every $(i,j) \in E(D)$ we have $H_i \rightarrow H_j$. So $H$ satisfies the conditions of Lemma \ref{lem:force} with respect to the $k$-coloring $V(H) = H_1 \cup \dots \cup H_k$ and the oriented graph $D$.
Apply Lemma \ref{lem:force} to get a $k$-partite tournament
$F = (V_1 \cup \dots \cup V_k, E(F))$ such that
$|V_i| = \frac{n}{r}$ (here we assume that $n$ is large enough so that $\frac{n}{r} \geq m_0(h)$), and
\begin{equation}\label{eq:edge_direction}
\forall 1 \leq i < j \leq k, \; \; \; \{a_i,a_j\} \notin E(K) \Longrightarrow V_i \rightarrow V_j.
\end{equation}

Let $\K$ be the collection of $k$-cliques given by Item 3 in Theorem \ref{thm:RS}. Note that $\left| \K \right| \geq \delta r^2$; that the $k$-cliques in $\K$ are pairwise edge-disjoint and of the form $\{x_1,\dots,x_k\}$ with $x_i \in X_i$; and that every edge in $R$ is contained in (exactly) one of these $k$-cliques. These properties will be important in what follows.
We define a tournament $T$ on an $\frac{n}{r}$-blowup of $V(R)$, that is, each vertex $x \in V(R)$ is replaced by a vertex-set $B(x)$ of size $\frac{n}{r}$. Put
$B(X_i) = \bigcup_{x \in X_i}{B(x)}$.
The edges of $T$ are oriented as follows.
\begin{enumerate}
\item $B(X_i)$ is transitive for every $i = 1,...,k$.
\item For every $1 \leq i < j \leq k$ and for every
$x_i \in X_i, x_j \in X_j$, if $\{x_i,x_j\} \notin E(R)$ then set $B(x_i) \rightarrow B(x_j)$.
\item For every $\{x_1,...,x_k\} \in \K$, put a copy of $F$ on
$B(x_1) \cup \dots \cup B(x_k)$ with $B(x_i)$ playing the role of $V_i$ for every $i = 1,...,k$.
\end{enumerate}

Since every edge of $R$ is contained in one of the cliques in $\K$, Items $2$ and $3$ together define the orientation of edges between $B(y)$ and $B(z)$ for every pair of vertices $y,z \in V(R)$ which belong to different clusters $X_1,\dots,X_k$.
Therefore, Items 1-3 indeed define a tournament.
There is no contradiction in Item 3 because the cliques in $\K$ are pairwise edge-disjoint.

We now show that $T$ satisfies our requirements, that is, $T$ is $\varepsilon$-far from being $H$-free yet contains at most $\varepsilon^{\alpha\log(1/\varepsilon)}n^h$ copies of $H$ (for a constant $\alpha = \alpha(h)$ that we choose later). We start with the following two observations that play a central role in the proof. First, notice that by Item 2 and by the combination of Item $3$ and (\ref{eq:edge_direction}) we have the following:
\begin{equation}\label{eq:nonedges_oriented_forward}
\forall 1 \leq i < j \leq k, \; \; \; \{a_i,a_j\} \notin E(K) \Longrightarrow B(X_i) \rightarrow B(X_j).
\end{equation}
Secondly, let $\C$ be the set of all $\ell$-tuples $(v_{i_1},\dots,v_{i_{\ell}}) \in
B(X_{i_1}) \times \dots \times B(X_{i_{\ell}})$ such that for every $j = 1,\dots,\ell$, if $i_j < i_{j+1}$ then $v_{i_{j+1}} \rightarrow v_{i_j}$ and if $i_j > i_{j+1}$ then $v_{i_{j}} \rightarrow v_{i_{j+1}}$, with indices taken modulo $\ell$. We will need the inequality
\begin{equation}\label{eq:few_special_cycles}
\left| \C \right| \leq \frac{n^{\ell}}{r},
\end{equation}
which we now prove. Given $(v_{i_1},\dots,v_{i_{\ell}}) \in \C$, let $x_{i_j} \in X_{i_j}$ be such that $v_{i_j} \in B(x_{i_j})$. We claim that $x_{i_1}x_{i_2}\dots x_{i_{\ell}}x_{i_1}$ is a cycle in $R$. Let
$1 \leq j \leq \ell$ and let us first handle the case that
$i_{j} < \nolinebreak i_{j+1}$. By the definition of $\C$ we have $v_{i_{j+1}} \rightarrow v_{i_j}$. By Item 2 in the construction of $T$ above, if $\{x_{i_j},x_{i_{j+1}}\} \notin \nolinebreak E(R)$ then
$B(x_{i_j}) \rightarrow B(x_{i_{j+1}})$, in contradiction to
$v_{i_{j+1}} \rightarrow v_{i_j}$. Therefore
$\{x_{i_j},x_{i_{j+1}}\} \in \nolinebreak E(R)$ in this case. Similarly, if $i_{j} > i_{j+1}$ then by the definition of $\C$ we have $v_{i_{j}} \rightarrow v_{i_{j+1}}$. By Item 2 in the construction of $T$, if $\{x_{i_j},x_{i_{j+1}}\} \notin E(R)$ then
$B(x_{i_{j+1}}) \rightarrow B(x_{i_{j}})$, in contradiction to $v_{i_{j}} \rightarrow v_{i_{j+1}}$. Therefore $\{x_{i_j},x_{i_{j+1}}\} \in E(R)$ in this case as well,
proving our assertion that $x_{i_1}x_{i_2}\dots x_{i_{\ell}}x_{i_1}$ is a cycle in $R$. By Item $4$ in Theorem \ref{thm:RS}, there are at most $r^2$ such cycles in $R$. Since $T$ is an $\frac{n}{r}$-blowup of $R$ and
$\ell \geq 3$, we get that
$\left| \C \right| \leq r^2 \left( \frac{n}{r} \right)^{\ell} \leq \frac{n^{\ell}}{r}$, establishing (\ref{eq:few_special_cycles}).

Let us prove that $T$ contains at most
$\varepsilon^{\alpha \log(1/\varepsilon)} n^h$ copies of $H$, where
$\alpha = \alpha(h) = 0.5c$ and $c = c(k)$ is from Theorem \ref{thm:RS}. We will show that every copy of $H$ in $T$ contains vertices $v_{i_1},\dots,v_{i_{\ell}}$ such that $\left(v_{i_1},\dots,v_{i_{\ell}}\right) \in \C$ (recall the definition of $\C$ above). This will imply that $T$ contains at most $\left| \C \right| \cdot n^{h - \ell}$ copies of $H$. By (\ref{eq:few_special_cycles}) we have $\left| \C \right| \leq \frac{n^{\ell}}{r}$,
and by Item 2 in Theorem \ref{thm:RS} we have
$r \geq (1/\delta)^{c\log(1/\delta)}$. By using our choice of $\delta$ and the first inequality in (\ref{eq:eps_0_hard}), we will conclude that $T$ contains at most
\begin{equation*}
\frac{n^{\ell}}{r} \cdot n^{h - \ell} = \frac{n^h}{r} \leq
\delta^{c\log(1/\delta)}n^h =
\left(\varepsilon \gamma^{-1} \right)^{c\log(\gamma/\varepsilon)} n^h \leq
\varepsilon^{0.5c\log(1/\varepsilon)} n^h =
\varepsilon^{\alpha\log(1/\varepsilon)}n^h
\end{equation*}
copies of $H$, as required. Hence, in order to complete the proof of the theorem we only need to show that every copy of $H$ in $T$ contains vertices $v_{i_1},\dots,v_{i_{\ell}}$ such that $\left(v_{i_1},\dots,v_{i_{\ell}}\right) \in \C$.

Consider an embedding $\varphi : H \rightarrow T$; that is, $\Image \varphi$ is a copy of $H$ in $T$ with $\varphi(v)$ playing the role of $v$ for every $v \in V(H)$. For $i = 1,...,k$ define
$U_i = \varphi^{-1}(B(X_i))$. Then $U_i$ is acyclic by Item 1 in the construction of $T$ above. Consider the vertex-labeling of $H$ with labels $1,...,h$ in which: (a) for every $1 \leq i < j \leq k$, the labels given to the vertices of $U_i$ are smaller than the labels given to the vertices of $U_j$, and (b)  for every $i = 1,...,k$, the vertices in $U_i$ are labeled in such a way that all edges are forward-edges, that is, for every
$u,v \in U_i$ we have
$u \rightarrow v$ only if $u < v$ (such a vertex-labeling of $U_i$ exists since $U_i$ is acyclic). Let $G$ be the backedge graph of $H$ with respect to this vertex-labeling.
Notice that if $\{u,v\} \in E(G)$ and $u < v$ then there are
$1 \leq i < j \leq k$ such that $u \in U_i$ and $v \in U_j$, as $U_i$ is an independent set in $G$ for every $i = 1,\dots,k$.

We claim that the function
$f : V(H) \rightarrow V(K)$ which maps $U_i$ to $a_i$ is an order-preserving homomorphism from $G$ to $K$. The fact that $f$ is order-preserving is immediate from the definition of the labeling. To see that $f$ is a graph homomorphism, consider any edge
$\{u,v\} \in E \left( G \right)$ and assume without loss of generality that $u < v$. By the definition of a backedge graph we have $v \rightarrow u$ in $H$, implying that $\varphi(v) \rightarrow \varphi(u)$ in $T$. As mentioned before, there are
$1 \leq i < j \leq k$ such that
$u \in U_i$ and $v \in U_j$. Assume for the sake of contradiction, that we have
$\{f(u), f(v)\} = \{a_i,a_j\} \notin E(K)$. By (\ref{eq:nonedges_oriented_forward}), this implies that
$B(X_i) \rightarrow B(X_j)$. Since $\varphi(u) \in B(X_i)$ and
$\varphi(v) \in B(X_j)$, we get a contradiction to
$\varphi(v) \rightarrow \varphi(u)$.
Therefore $\{f(u), f(v)\} = \{a_i,a_j\} \in E(K)$, showing that $f$ is a homomorphism.

Having shown that $f$ is an order-preserving homomorphism, we use
Corollary \ref{cor:odd_cycle} to infer that there are
$u_{i_j} \in f^{-1}(a_{i_j})$, $1 \leq j \leq \ell$, such that $u_{i_1}u_{i_2}\dots u_{i_{\ell}}u_{i_1}$ is a cycle in $G$. Denote $v_{i_j} = \varphi(u_{i_j})$ and observe that by the definition of $f$ we have $v_{i_j} \in B(X_{i_j})$. We now show that
$\left( v_{i_1},\dots,v_{i_{\ell}} \right) \in \C$.
For every $1 \leq j \leq \ell$ we have $\{u_{i_j},u_{i_{j+1}}\} \in E\left( G \right)$ (with indices taken modulo $\ell$). Fix any
$1 \leq j \leq \ell$ and assume first that $i_j < i_{j+1}$.
Then
$u_{i_{j+1}} \rightarrow u_{i_{j}}$ in $H$ by the definition of the backedge graph.
Therefore
$v_{i_{j+1}} = \varphi(u_{i_{j+1}}) \rightarrow \varphi(u_{i_j}) = v_{i_j}$, as $\varphi$ is an embedding.
Similarly, if $i_j > i_{j+1}$ then
$u_{i_{j}} \rightarrow u_{i_{j+1}}$ in $H$ by the definition of the backedge graph, implying that
$v_{i_{j}} = \varphi(u_{i_{j}}) \rightarrow \varphi(u_{i_{j+1}}) = v_{i_{j+1}}$.
This shows that $\left( v_{i_1},\dots,v_{i_{\ell}} \right) \in \C$, as required.

Having shown that $T$ contains at most
$\varepsilon^{\alpha \log(1/\varepsilon)} n^h$ copies of $H$, we now prove that $T$ is $\varepsilon$-far from being $H$-free. We say that an edge
$e$ is a {\em cluster-edge} if it is contained in $B(X_i)$ for some $i = 1,...,k$, and is a {\em cut-edge} otherwise. Let $T'$ be any tournament obtained from $T$ by reversing less than
$\varepsilon n^2$ edges. Our goal is to show that $T'$ contains a copy of $H$. Let $T''$ be the tournament that agrees with $T$ on all cut-edges and agrees with $T'$ on all cluster-edges. Then $T''$ and $T'$ disagree on less than $\varepsilon n^2$ edges, and the same is true for $T''$ and $T$.

For every
$Y = \{y_1,...,y_k\} \in \K$, the tournament
$T''[B(y_1) \cup \dots \cup B(y_k)]$ is a completion of $F$ (by Item 3 in the construction of $T$, and because $T''$ agrees with $T$ on cut-edges). By the choice of $F$ via Lemma \ref{lem:force}, $T''[B(y_1) \cup \dots \cup B(y_k)]$ contains a collection $\mathcal{H}(Y)$ of at least $\gamma \left( \frac{n}{r} \right)^2$ copies of $H$, any two of which do not share cut-edges.
Let
$Y = \{y_1,\dots,y_k\}$ and $Z = \{z_1,\dots,z_k\}$ be distinct cliques in $\K$. Since $Y$ and $Z$ are edge-disjoint, $T''[B(y_1) \cup \dots \cup B(y_k)]$ and
$T''[B(z_1) \cup \dots \cup B(z_k)]$ do not share cut-edges. Therefore, copies of $H$ from $\mathcal{H}(Y)$ do not share cut-edges with copies of $H$ from $\mathcal{H}(Z)$.
Put $\mathcal{H} := \bigcup_{Y \in \K}{\mathcal{H}(Y)}$. Then $\mathcal{H}$ is a collection of
copies of $H$ in $T''$, any two of which do not share cut-edges.
By $|\K| \geq \delta r^2$ and our choice of $\delta$, we have
$\left| \mathcal{H} \right| \geq \delta r^2 \gamma \left( \frac{n}{r} \right)^2 = \varepsilon n^2$.
Since the copies of $H$ in $\mathcal{H}$ do not share cut-edges, one must reverse at least $|\mathcal{H}| \geq \varepsilon n^2$ cut-edges in order to destroy all copies of $H$ in $T''$.
Recall that $T'$ and $T''$ agree on cluster-edges, and disagree on less than $\varepsilon n^2$ edges.
Therefore, one of the copies of $H$ in $T''$ is also present in $T'$.
This completes the proof of the theorem.
\end{proof}

\section{The Hardness of Deciding Tournament Colorability}\label{sec:np}
In this section we prove Theorem \ref{thm:NP_hard}. The main challenge in proving Theorem \ref{thm:NP_hard} is the case $k=2$.
\begin{theorem}\label{thm:NP_hard_k=2}
Deciding if a tournament is $2$-colorable is $NP$-hard.
\end{theorem}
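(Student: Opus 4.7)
The plan is to reduce from the classical NP-hard problem of 2-coloring a 3-uniform hypergraph (equivalently, Monotone Not-All-Equal 3-SAT). The starting observation is that a tournament $T$ is 2-colorable in the sense of Section~\ref{sec:lemma} if and only if the 3-uniform hypergraph $\mathcal{H}(T)$ on $V(T)$ whose edges are the vertex sets of the directed 3-cycles of $T$ has chromatic number at most $2$, because a tournament is acyclic iff it contains no directed 3-cycle. So, given a 3-uniform hypergraph $\mathcal{H}$, the task is to construct in polynomial time a tournament $T(\mathcal{H})$ for which $\mathcal{H}(T(\mathcal{H}))$ captures exactly the 2-coloring problem on $\mathcal{H}$.

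The first step is to construct an \emph{inequality gadget}: a constant-sized tournament $G_{\neq}$ with two distinguished vertices $a,b$ such that every 2-coloring assigns $a$ and $b$ to different color classes, and any such assignment extends to a 2-coloring of $G_{\neq}$. A concrete 5-vertex example on $\{a,b,c,d,e\}$ uses the edges $a\to b$, $b\to c,d,e$, $c,d,e\to a$, and the cyclic triangle $c\to d\to e\to c$: one checks that its only 3-cycles are $\{a,b,c\},\{a,b,d\},\{a,b,e\},\{c,d,e\}$, and if $\chi(a)=\chi(b)$ then the first three force $\chi(c)=\chi(d)=\chi(e)\neq\chi(a)$, making $\{c,d,e\}$ monochromatic --- a contradiction. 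Chaining two copies of $G_{\neq}$ through a shared ``middle'' vertex then yields an \emph{equality gadget} $G_{=}$ forcing two distinguished vertices to share a color.

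Given a 3-uniform hypergraph $\mathcal{H}=(V,E)$, the tournament $T(\mathcal{H})$ is assembled as follows. For each hyperedge $e=\{u,v,w\}\in E$, introduce three fresh ``copy'' vertices $u^e,v^e,w^e$ oriented as a directed 3-cycle. For each hypergraph vertex $v\in V$, introduce a ``base'' vertex and attach a copy of $G_{=}$ between $v$ and each $v^e$ (for $e\ni v$), so that in any 2-coloring of $T(\mathcal{H})$ the bases determine the colors of all their copies. All remaining edges of $T(\mathcal{H})$ --- those not already prescribed by a hyperedge triangle, a gadget interior, or a gadget-to-endpoint attachment --- are oriented according to a carefully chosen global linear order on $V(T(\mathcal{H}))$, with the vertices of each designed substructure forming a contiguous block. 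By construction, every triple of $T(\mathcal{H})$ that spans two or more different blocks is transitive, so the 3-cycle hypergraph $\mathcal{H}(T(\mathcal{H}))$ consists of exactly the designed 3-cycles.

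The main obstacle is the arrangement of blocks so that no \emph{spurious} 3-cycles arise from the filler edges while every intended 3-cycle is preserved. Because every pair of tournament vertices is joined by an oriented edge, one cannot simply avoid creating triangles; each base vertex $v$ moreover participates in many equality gadgets (one for each hyperedge through $v$), so the linear order must be chosen so that every such gadget still has its internal 3-cycles, yet no cross-gadget or base-to-copy triple accidentally forms a 3-cycle imposing a new constraint. Concretely, this can be done by placing the bases first, then all gadget interiors, then all hyperedge-copies, and by orienting each gadget's attachment to its endpoints so that exactly the designed 3-cycles are created. Once this is set up, correctness is immediate: a 2-coloring of $T(\mathcal{H})$ restricted to the bases yields a 2-coloring of $\mathcal{H}$ (since each hyperedge 3-cycle, together with the pinning of copies to bases by the equality gadgets, forbids the three bases from all sharing a color), and conversely any 2-coloring of $\mathcal{H}$ lifts back through the gadgets to a 2-coloring of $T(\mathcal{H})$.
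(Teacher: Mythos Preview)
Your reduction follows the same architecture as the paper's (base vertices, cyclic triangles encoding constraints, equality gadgets pinning copies to bases, and a global linear order for the filler edges), so the strategy is sound. The paper reduces from Triangle-Free Cut rather than hypergraph $2$-coloring and builds a direct $7$-vertex equality gadget, but these are cosmetic differences.

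The genuine gap is the sentence ``Concretely, this can be done by placing the bases first, then all gadget interiors, then all hyperedge-copies \dots\ so that exactly the designed $3$-cycles are created.'' With your gadget $G_{\neq}$ and that ordering, spurious $3$-cycles \emph{do} arise and they destroy the backward direction of the reduction. Take two base vertices $u<v$ and let $c$ be the ``$c$''-vertex of some inequality gadget attached to $u$ (so $c\to u$ is a gadget edge). Since bases precede interiors and $c$ is not attached to $v$, the filler gives $u\to v$ and $v\to c$; together with $c\to u$ this is a directed triangle $\{u,v,c\}$. The same happens for the $d$- and $e$-vertices. Now suppose $\mathcal H$ is $2$-colorable and there is some base $v>u$ with $\chi(v)=\chi(u)$ (this is unavoidable once there are at least three base vertices). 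Then each of these spurious triangles forces $\chi(c),\chi(d),\chi(e)\neq\chi(u)$, which makes the designed triangle $\{c,d,e\}$ monochromatic. So the lifted coloring cannot exist and the ``if'' direction fails.

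The underlying reason is that your endpoint $a$ has $N^{-}(a)=\{c,d,e\}$ inside $G_{\neq}$, and there is no proper $2$-coloring of $G_{\neq}$ in which all of $N^{-}(a)$ receive the opposite color to $a$. The paper's gadget $H$ is engineered precisely to have this extra feature (Item~1 of Proposition~\ref{prop:gadget}): there is a proper $2$-coloring in which every vertex of $N^{-}(u)\cup N^{+}(v)$ gets the color opposite to that of $u$ and $v$. With that property in hand, every edge from a gadget interior back into the base block (and every edge from the copy block into a gadget interior) is automatically bichromatic, and the case analysis showing that all cross-block $3$-cycles are non-monochromatic goes through. Your $G_{\neq}$ cannot be oriented or attached to achieve this, so either you need a different gadget with that additional coloring property, or you must carry out a much more delicate case analysis than the one you sketched.
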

After proving Theorem \ref{thm:NP_hard_k=2} we show how to deduce Theorem \ref{thm:NP_hard} from a simple reduction from the $k$-Colorability problem to the $(k-1)$-Colorability problem for every $k \geq 3$.
Theorem \ref{thm:NP_hard_k=2} is proved by showing a reduction from a known $NP$-hard problem: the {\em Triangle-Free Cut Problem}, to the Tournament $2$-Colorability problem.
\begin{definition}[{\bf Triangle-Free Cut}]
	For an (undirected) graph $G$, a {\em triangle-free cut} of $G$ is a $2$-coloring of $V(G)$ with no monochromatic triangle.
\end{definition}
\noindent
It is known that the problem of deciding if a given graph has a triangle-free cut is $NP$-hard (see \cite{triangle_cut}).

For a vertex $v$ in a tournament we denote $N^+(v) = \{u : v \rightarrow u\}$ and $N^-(v) = \{u : u\rightarrow v\}$. If a pair of vertices $u,v$ in a tournament satisfy $u \rightarrow v$ then we say that $u$
{\em dominates} $v$ and that $v$ is {\em dominated} by $u$.
For the proof of Theorem \ref{thm:NP_hard_k=2} we need the following proposition regarding the gadget $H$ depicted in Figure 1.

\begin{proposition}\label{prop:gadget}
$H$ has the following properties.
	\begin{enumerate}
	\item $H $ has a proper 2-coloring in which $u$ and $v$ have the same color and all the vertices in the set
			$N^-(u) \cup N^+(v)$ have the other color.
	\item In every proper 2-coloring of $H$, the colors of $u$ and $v$ are the same.	
	\end{enumerate}
\end{proposition}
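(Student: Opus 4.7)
The plan is to establish each of the two items of Proposition~\ref{prop:gadget} by directly inspecting the gadget $H$ drawn in Figure~1. Write $W$ for the set of vertices of $H$ other than $u$, $v$ and those in $N^-(u)\cup N^+(v)$; these are the ``auxiliary'' vertices internal to the gadget. Every proper $2$-coloring of $H$ must split $V(H)$ into two sets, each inducing a transitive sub-tournament (since a tournament is acyclic iff it is transitive), so throughout we will think of a ``color class'' as a set admitting a linear order along which all its edges point forward.

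For Part 1, I will exhibit the desired 2-coloring explicitly. Let $A=\{u,v\}\cup W_A$ and $B=(N^-(u)\cup N^+(v))\cup W_B$, where $W_A,W_B$ is the partition of $W$ dictated by Figure~1. The task then reduces to verifying that both $H[A]$ and $H[B]$ admit a topological ordering, which can be read off directly from the picture. The fact that $u$ and $v$ are in $A$ causes no harm: the only edge between them in $H$ is oriented consistently with the ordering, and all other edges inside $A$ and $B$ are between a (former) neighbor of $u$ or $v$ and the appropriate auxiliary vertex, and were drawn in Figure~1 precisely so that this ordering exists.

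For Part 2, I argue by contradiction. Suppose that in some proper 2-coloring of $H$, $u$ and $v$ receive different colors; without loss of generality color $u$ red and $v$ blue. The strategy is to traverse the auxiliary vertices one at a time and show that each of their colors is forced, by the following uniform mechanism: each auxiliary vertex $w$ sits in a short directed cycle of $H$ together with $u$ or with $v$ (and possibly with another auxiliary vertex already pinned down), so giving $w$ the ``wrong'' color would close a monochromatic directed cycle, contradicting properness. Once every auxiliary vertex's color is pinned down, the remaining directed cycle of $H$ that passes through both $u$ and $v$ (the one exhibiting that $H$ is not $2$-colorable when $u,v$ differ) lies entirely inside one color class, giving the desired contradiction.

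I expect the main obstacle to be Part 2: one must verify that the case analysis of auxiliary-vertex colors is exhaustive and that the forcing argument really does leave no free choices. This hinges on the specific edges of $H$ in Figure~1 rather than on any conceptual idea—those edges were chosen exactly so that every attempt to color $u$ and $v$ differently creates a directed $3$-cycle inside the red class or the blue class. The verification is purely combinatorial, but it is the whole point of the gadget construction and must be carried out carefully.
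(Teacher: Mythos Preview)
Your overall strategy matches the paper's: exhibit an explicit $2$-coloring for Part~1, and for Part~2 assume $u$ is red, $v$ is blue, and use directed $3$-cycles to constrain the remaining vertices until a contradiction appears. However, your description of the Part~2 endgame contains a genuine error. You write that ``the remaining directed cycle of $H$ that passes through both $u$ and $v$ \ldots\ lies entirely inside one color class''. This cannot happen: $u$ and $v$ are assumed to lie in \emph{different} color classes, so no monochromatic cycle can contain both of them. The contradiction must therefore arise elsewhere.

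In the actual gadget (with $V(H)=\{u,v,a,b,c,d,w\}$, $N^-(u)=\{a,b\}$, $N^+(v)=\{c,d\}$, and $W=\{w\}$), the colors of $a,b,c,d$ are \emph{not} forced one by one. Rather, since every pair $(x,y)\in\{a,b\}\times\{c,d\}$ forms a cyclic triangle with each of $u$ and $v$, one deduces only that no color appears in both $\{a,b\}$ and $\{c,d\}$; hence $\{a,b\}$ is monochromatic of one color and $\{c,d\}$ monochromatic of the other, giving two symmetric cases rather than a unique forcing. The contradiction then comes from $w$: the triangles $\{a,b,w\}$ and $\{c,d,w\}$ are both cyclic, so $w$ cannot receive either color. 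For Part~1 the paper simply takes $\{u,v,w\}$ versus $\{a,b,c,d\}$; your reference to ``the partition of $W$ dictated by Figure~1'' is too vague, since the figure depicts the tournament and not a coloring.
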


\begin{proof}
	For Item 1, color $u,v,w$ with one color and $a,b,c,d$ with the other color. We now prove Item 2. Consider a $2$-coloring of $V(H)$ in which $u$ and $v$ have different colors, say $u$ is colored red and $v$ is colored blue. If there is a color, red or blue, that appears in both $\{a,b\}$ and $\{c,d\}$, then the coloring is not proper, as we get a monochromatic cyclic triangle by joining either $u$ or $v$. Therefore, we may assume that either $a,b$ are colored with red and $c,d$ are colored with blue, or vice versa. But in both cases there is no color for $w$ as $\{a,b,w\}$ and $\{c,d,w\}$ are cyclic triangles.
\end{proof}

\begin{figure}\label{fig:H}
\begin{center}
\begin{tikzpicture}
\draw[fill] (1,0) circle [radius=0.05];
\node [below right] at (1,0) {$v$};

\draw[fill] (-1,0) circle [radius=0.05];
\node [below right] at (-1,0) {$u$};

\draw[fill] (2,1) circle [radius=0.05];
\node [below right] at (2,1) {$d$};

\draw[fill] (2,2) circle [radius=0.05];
\node [below right] at (2,2) {$c$};

\draw[fill] (-2,1) circle [radius=0.05];
\node [above left] at (-2,1) {$b$};

\draw[fill] (-2,2) circle [radius=0.05];
\node [above left] at (-2,2) {$a$};

\draw[fill] (0,3) circle [radius=0.05];
\node [above right] at (0,3) {$w$};

\draw [->] (-1,0) -- (0.95,0);
\draw [->] (-1,0) -- (-0.02,2.94);
\draw [->] (0,3) -- (1-0.02,0.06);
\draw [->] (-1,0) -- (2-0.06,0.98);
\draw [->] (-1,0) -- (2-0.06,2-0.04);
\draw [->] (1,0) -- (2,1-0.08);
\draw [->] (1,0) -- (2-0.05,2-0.1);
\draw [->] (-2,1) -- (-1-0.05,0+0.05);
\draw [->] (-2,2) -- (-1,0+0.1);
\draw [->] (-2,1) -- (1-0.12,0+0.04);
\draw [->] (-2,2) -- (1-0.12,0+0.12);
\draw [->] (2,2) -- (2,1+0.05);
\draw [->] (-2,2) -- (-2,1+0.05);
\draw [->] (2-0.15,1) -- (-2+0.15,1);
\draw [->] (2-0.08,1+0.02) -- (-2+0.12,2-0.03);
\draw [->] (2-0.15,2) -- (-2+0.22,2);
\draw [->] (2-0.16,2-0.04) -- (-2+0.24,1+0.06);
\draw [->] (0+0.06,3-0.03) -- (2-0.06,2+0.03);
\draw [->] (2-0.05,1+0.05) -- (0+0.1,3-0.1);
\draw [->] (0-0.06,3-0.03) -- (-2+0.06,2+0.03);
\draw [->] (-2+0.05,1+0.05) -- (0-0.1,3-0.1);
\end{tikzpicture}
\caption{the gadget $H$}
\end{center}
\end{figure}
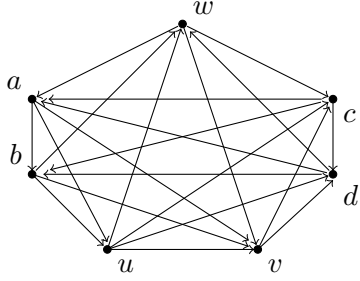

\begin{proof}[Proof of Theorem \ref{thm:NP_hard_k=2}]
	 Given a graph $G$ with vertices $V(G) = \{x_1,...,x_n\}$, we construct a tournament $T = T(G)$ and prove that $G$ has a triangle-free cut if and only if $T$ is $2$-colorable. $T$ is defined as follows.
	First, we put in $T$ vertices $y_1,...,y_n$ and set
	$y_i \rightarrow y_j$ for every
	$i < j$. We think of $y_i$ as corresponding to the vertex $x_i$ of $G$. Denote
	$Y = \{y_1,...,y_n\}$. Let $C_1,...,C_m$ be an enumeration of all triangles in $G$. Fix $1 \leq t \leq m$ and suppose that $C_t$ contains the vertices $x_i,x_j,x_k \in V(G)$, where $i < j < k$. We add to $T$ three new vertices, $z_t^{i},z_t^{j},z_t^{k}$, and set
	$z_t^{i} \rightarrow z_t^{j} \rightarrow z_t^{k} \rightarrow z_t^{i}$. So $Z_t := \left\{z_t^{i},z_t^{j},z_t^{k}\right\}$ spans a cyclic triangle. Set $Z_s \rightarrow Z_t$ for each $1 \leq s < t \leq m$. Denote
	$Z = \bigcup_{t=1}^{m}{Z_t}$ and set $Y \rightarrow Z$.
	
	Let $1 \leq t \leq m$, suppose that
	$Z_t = \left\{z_t^{i},z_t^{j},z_t^{k}\right\}$, where $i < j < k$, and fix any $\ell \in \{i,j,k\}$. We add a copy of $H$ (see Figure 1), denoted by $H_t^{\ell}$, in which $y_{\ell}$ plays the role of $u$, $z_t^{\ell}$ plays the role of $v$ and all other five vertices are new. Notice that this does not contradict $Y \rightarrow Z$, as we have
	$u \rightarrow v$ in $H$. Let $K_t^{\ell}$ be the subtournament of $H_t^{\ell}$ spanned by the five ``new'' vertices, that is
	$V(K_t^{\ell}) = V(H_t^{\ell}) \setminus (Y \cup Z)$. Set
	$K_t^{i} \rightarrow K_t^{j} \rightarrow K_t^{k}$ and
	$K_t^{i} \rightarrow K_t^{k}$.
	Denote
	$K_t = K_t^{i} \cup K_t^{j} \cup K_t^{k}$ and for each
	$1 \leq s < t \leq m$ set $K_s \rightarrow K_t$.
	
	Define $K = \bigcup_{t=1}^{m}{K_t}$ and note that we have $|Y| = n$, $|Z| = 3m$ and $|K| = 15m$. The vertex set of the tournament
	$T(G)$ is $Y \uplus Z \uplus K$. So far we defined the edges of $T(G)$ inside $Y$, $Z$ and $K$ and we set
	$Y \rightarrow Z$.
	We also already put some edges between $Y$ and $K$ and between $K$ and $Z$, namely the edges which are contained in $H_t^{\ell}$ for some
	$1 \leq t \leq m$ and $1 \leq \ell \leq n$. We direct all other edges from $Y$ to $K$ and from $K$ to $Z$; that is, if a pair
	$(p,q) \in Y \times K$ is not contained in any $H_t^{\ell}$ then we set $p \rightarrow q$, and similarly for $K$ and $Z$. In what follows we use the fact that an edge going from $K$ to $Y$ or from $Z$ to $K$ is contained in $H_t^{\ell}$ for some $1 \leq t \leq m$ and $1 \leq \ell \leq n$. This completes the definition of the tournament $T = T(G)$.
	
	
	It remains to show that $G$ has a triangle-free cut if and only if $T$ is $2$-colorable. Assume first that $T$ admits a proper $2$-coloring,
		$c : V(T) \rightarrow \{\text{red, blue}\}$. For each $i = 1,...,n$ set $\phi(x_i) = c(y_i)$. We claim that $\phi$ is a triangle-free cut of $G$, that is, for every $1 \leq t \leq m$, the triangle $C_t$ in $G$ is not monochromatic. Fix $1 \leq t \leq m$ and suppose that $C_t$ contains the vertices $x_i,x_j,x_k$. By Item 2 in Proposition \ref{prop:gadget}, it must be the case that
		$c\left( z_t^{i} \right) = c(y_{i})$, $c\big( z_t^{j} \big) = c(y_{j})$ and $c\left( z_t^{k} \right) = c(y_{k})$. Since the set $Z_t = \{z_t^{i},z_t^{j},z_t^{k}\} \subseteq V(T)$ spans a cyclic triangle, we deduce that $c(y_i), c(y_j), c(y_k)$ are not all identical. Our choice of $\phi$ guarantees that $C_t$ is not monochromatic.
		
		Now assume that $G$ admits a triangle-free cut,
		$\phi : V(G) \rightarrow \{\text{red, blue}\}$. We define a $2$-coloring $c$ of $V(T)$ as follows. First, set $c(y_i) = \phi(x_i)$ for every $i = 1,\dots,n$. Next, let $1 \leq t \leq m$ and suppose that
		$Z_t = \left\{ z_t^{i},z_t^{j},z_t^{k} \right\}$. For each
		$\ell \in \{i,j,k\}$ set
		$c\left( z_t^{\ell} \right) = c(y_{\ell})$. Recall that $H_t^{\ell}$ is a copy of $H$ in which $y_{\ell}$ plays the role of $u$ and $z_t^{\ell}$ plays the role of $v$.
		Extend the coloring of $\left\{y_{\ell},z_t^{\ell}\right\}$ to a coloring of $H_t^{\ell}$ as in Item 1 of Proposition \ref{prop:gadget}, that is,  $H_t^{\ell}$ is colored properly and any vertex that dominates $y_{\ell}$ or that is dominated by $z_t^{\ell}$ has a different color from that of $y_{\ell}, z_t^{\ell}$. This guarantees that $H_t^{\ell}$ does not contain monochromatic edges going from $K$ to $Y$ or from $Z$ to $K$. As mentioned before, any edge in $T$ going from $K$ to $Y$ or from $Z$ to $K$ is contained in $H_t^{\ell}$ for some $1 \leq t \leq m$ and $1 \leq \ell \leq n$. We conclude that $T$ does not contain monochromatic edges going from $K$ to $Y$ or from $Z$ to $K$.
		
		It remains to show that the $2$-coloring $c$ of $V(T) = Y \cup Z \cup K$, defined in the previous paragraph, is proper. Let $S$ be a cyclic triangle in $T$. We show by case analysis that $S$ is not monochromatic. First we consider the cases (a) $S \subseteq Y \cup K$ and $S$ intersects both $Y$ and $K$, (b) $S \subseteq K \cup Z$ and $S$ intersects both $K$ and $Z$, (c) $S$ has one vertex in each of the sets $Y,Z,K$. Case (a) implies that $S$ contains an edge going from $K$ to $Y$. Similarly, case (b) implies that $S$ contains an edge that goes from $Z$ to $K$. Case (c) also implies that $S$ contains an edge from $Z$ to $K$ because $Y \rightarrow Z$. As proven in the previous paragraph, $T$ does not contain any monochromatic edge going from $K$ to $Y$ or from $Z$ to $K$. Therefore, $S$ is not monochromatic in each of the cases (a), (b) and (c).
		
		Given the previous paragraph, the only remaining cases to consider are
		$S \subseteq Y \cup Z$ and $S \subseteq K$. First, notice that the only cyclic triangles which are contained in $Z$ are
		$Z_1,\dots,Z_m$. Let $1 \leq t \leq m$ and suppose that
		$Z_t = \left\{ z_t^{i},z_t^{j},z_t^{k} \right\}$.

		By the definition of the coloring $c$ we have
		$c\left( z_t^{\ell} \right) = c(y_{\ell}) = \phi(x_{\ell})$ for every $\ell \in \{i,j,k\}$. The vertices of the triangle $C_t$ (in $G$) are $x_i,x_j,x_k$. Since $\phi$ is a triangle-free cut, it follows that $\phi(x_i), \phi(x_j), \phi(x_k)$ are not all identical. Therefore
		$c\left( z_t^{i} \right), c\big( z_t^{j} \big), c\left( z_t^{k} \right)$ are not all identical, namely $Z_t$ is not monochromatic.
		
		Recall that $Y$ is transitive and we have $Y \rightarrow Z$. Therefore $Y \cup Z$ does not contain any monochromatic cyclic triangle. Finally, every cyclic triangle inside $K$ is contained in some $K_t^{\ell}$. These triangles are not monochromatic because each $K_t^{\ell}$ is colored properly. This finishes the case analysis, showing that $T$ does not contain a monochromatic cyclic triangle and completing the proof of the theorem.
\end{proof}
\begin{proof}[Proof of Theorem \ref{thm:NP_hard}]
We will show that for every $k \geq 3$ there is a simple reduction from the $k$-Colorability problem to the $(k-1)$-Colorability problem. Given this reduction, we can prove the theorem by induction on $k$, with the base case $k=2$ already settled by Theorem \ref{thm:NP_hard_k=2}.

Let $T$ be a tournament. We define a tournament $T'$ as follows. The vertex-set of $T'$ consists of two vertex-disjoint copies of $T$, denoted $T_1$ and $T_2$, and an additional vertex $z$. We set
$T_1 \rightarrow T_2 \rightarrow z \rightarrow T_1$. We now show that $T$ is $(k-1)$-colorable if and only if $T'$ is $k$-colorable. First, if $T$ is $(k-1)$-colorable then clearly $T'$ is $k$-colorable: we color $T_1$ and $T_2$ according to a proper $(k-1)$-coloring of $T$, using the same $k-1$ colors for both $T_1$ and $T_2$, and then color $z$ with the remaining $k$'th color. It is easy to see that this $k$-coloring of $T'$ is proper. In the other direction, suppose that there is a proper coloring $c : V(T') \rightarrow [k]$ and assume without loss of generality that $c(z) = k$. Then it cannot be the case that both $T_1$ and $T_2$ contain a vertex with color $k$, as that will imply that there is a cyclic triangle in this color. Therefore, there is $i=1,2$ such that $T_i$ is colored with $[k-1]$, implying that $T$ is $(k-1)$-colorable. This completes the proof of the theorem.
\end{proof}

\end{document}